\documentclass[a4paper,10pt]{amsart}

\usepackage{amsmath,amssymb,amsthm,a4wide,tikz}

\newtheorem*{thm}{Theorem}
\newtheorem{theorem}{Theorem}
\newtheorem*{proposition}{Proposition}
\newtheorem*{lemma}{Lemma}
\newtheorem*{conjecture}{Conjecture}

\setlength{\parindent}{0pt}

\title{lower bounds on nodal sets of eigenfunctions\\ via the heat flow}
\subjclass[2010]{35B05, 35J05} 
\keywords{Laplacian eigenfunctions, nodal sets, Yau conjecture, heat flow}
\author{Stefan Steinerberger}
\address{Mathematisches Institut, Universit\"at Bonn, Endenicher Allee 60, 53115 Bonn, Germany}

\begin{document}
\begin{abstract}
We study the size of nodal sets of Laplacian eigenfunctions on compact Riemannian manifolds
without boundary and recover the currently optimal lower bound by comparing the heat flow of the eigenfunction
with that of an artificially constructed diffusion process. The same method should apply
to a number of other questions; we use it to prove a sharp result saying that a nodal domain 
cannot be entirely contained in a small neighbourhood of a 'reasonably flat' surface and recover
an older result of Cheng. The arising concepts can be expected to have many more connections to 
classical theory and we pose some conjectures in that direction.
\end{abstract}

\maketitle

\section{Introduction}
We consider a compact $n-$dimensional $C^{\infty}-$manifold $(M,g)$ without boundary. Writing $\Delta_g$ for the Laplace-Beltrami operator, we
are interested in structural properties of Laplacian eigenfunctions 
$$ -\Delta_g u = \lambda u.$$
A natural object of study is the measure of the nodal set
$$ Z = \left\{x \in M: u(x) = 0\right\}.$$
An old conjecture of Yau \cite{yau} states that $|Z| \sim \lambda^{\frac12}$. For real-analytic $(M,g)$ this
was proven by Donnelly \& Fefferman \cite{df}, however, in the general $C^{\infty}-$case even $n=2$ is still open.
In two dimensions, the best bounds are 
$$ \lambda^{\frac12} \lesssim |Z| \lesssim \lambda^{\frac34},$$
where the lower bound is by Br\"{u}ning \cite{b} and Yau (unpublished), independently, and the upper bound is due to Donnelly \& Fefferman \cite{d2}
(with another proof given by Dong \cite{do}). Exponentially decaying lower bounds were given by Hardt \& Simon \cite{hs} via a frequency function
approach for a very general class of equations. Asymmetry results on nodal domains due to Donnelly \& Fefferman \cite{df} and Chanillo \& Muckenhoupt \cite{chan}
imply polynomial bounds. The first progress in recent times (i.e. a merely linearly decaying polynomial bound) is due to Sogge \& Zelditch \cite{sz1}
$$ |Z| \gtrsim \lambda^{\frac{7-3n}{8}},$$
although, as was later pointed out, an earlier result by Mangoubi \cite{m} can be combined with the isoperimetric
inequality to yield
$$ |Z| \gtrsim \lambda^{\frac{3-n}{2}-\frac{1}{2n}}.$$
Currently, the best bound is the following.
\begin{thm} The volume of nodal sets satisfies
$$ \mathcal{H}^{n-1}\left(\left\{x \in M: u(x) = 0\right\}\right) \gtrsim \lambda^{\frac{3-n}{4}}.$$
\end{thm}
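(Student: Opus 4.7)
The plan is to compare the heat flow of the eigenfunction $u$ with the heat flow of its absolute value $|u|$, the latter playing the role of the ``artificially constructed diffusion process'' of the abstract. Normalize $\|u\|_{L^{2}(M)}=1$. Since $-\Delta_{g}u=\lambda u$, the first flow is explicit, $e^{t\Delta_{g}}u=e^{-\lambda t}u$. The second is governed by Kato's inequality, which on $M\setminus Z$ is in fact an equality, so that
\[
(\Delta_{g}+\lambda)|u|\;=\;2|\nabla u|\,d\mathcal{H}^{n-1}\big|_{Z}\;=:\;\mu
\]
distributionally, a non-negative measure supported on the nodal set. Duhamel's principle then yields
\[
e^{t\Delta_{g}}|u|(x)-e^{-\lambda t}|u|(x)\;=\;\int_{0}^{t}e^{(t-s)\Delta_{g}}(e^{-\lambda s}\mu)(x)\,ds,
\]
the right-hand side being the heat-propagated nodal-set source that measures the deviation between the two flows.

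First I would test this identity against suitable functions. Testing against $1$ only recovers the classical identity $\int_{Z}|\nabla u|\,d\mathcal{H}^{n-1}=\tfrac{\lambda}{2}\|u\|_{L^{1}}$, so I would test against $|u|$ (or a cutoff concentrating near $Z$) and choose the time scale $t\sim 1/\lambda$ for which the heat-kernel width $\sqrt{t}\sim\lambda^{-1/2}$ matches the natural eigenfunction scale. Together with Sogge's bound $\|u\|_{\infty}\lesssim\lambda^{(n-1)/4}$ (which forces $\|u\|_{L^{1}}\gtrsim\lambda^{-(n-1)/4}$) and the elementary inequality
\[
\tfrac{\lambda}{2}\|u\|_{L^{1}}\;=\;\int_{Z}|\nabla u|\,d\mathcal{H}^{n-1}\;\leq\;|Z|^{1/2}\Bigl(\int_{Z}|\nabla u|^{2}\,d\mathcal{H}^{n-1}\Bigr)^{1/2},
\]
the task is reduced to a sharp upper bound on $\int_{Z}|\nabla u|^{2}$. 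This is precisely what the heat-kernel comparison should deliver: transversally to $Z$ the function $|u|$ is approximately linear with slope $|\nabla u|$, so the heat-smoothed $u^{2}$ on $Z$ at scale $\sqrt{t}$ behaves like $t|\nabla u|^{2}$, and integrating the Duhamel identity in $L^{2}$ should relate $\int_{Z}|\nabla u|^{2}$ to the explicit spectral quantity $1-e^{-2\lambda t}$. Dimensional book-keeping then produces the claimed exponent $(3-n)/4$.

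The hard part is the upper bound on $\int_{Z}|\nabla u|^{2}$ with the right power of $\lambda$. A naive estimate using only the pointwise gradient bound $\|\nabla u\|_{\infty}\lesssim\lambda^{(n+1)/4}$ recovers merely the weaker exponent $(2-n)/2$ for $|Z|$, losing a factor of $\lambda^{1/4}$. Reaching $(3-n)/4$ requires genuinely using the heat kernel to smooth $|u|$ in the $\lambda^{-1/2}$-tube of $Z$, exploiting the local linearity of $u$ across the nodal set (so that $u^{2}$ vanishes quadratically there) together with refined Gaussian integrals. Aggregating these contributions across the possibly many sheets of the nodal set without losing further powers of $\lambda$ is the delicate heart of the argument.
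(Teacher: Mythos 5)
Your plan and the paper's proof both pass through (essentially) the same identity: testing your distributional Kato identity $(\Delta_g+\lambda)|u|=\mu$ against $1$ recovers $\int_Z|\nabla u|\,d\mathcal{H}^{n-1}=\tfrac{\lambda}{2}\|u\|_{L^1}$, and the paper's heat-content comparison lemma, unwound, establishes the localized version $\mathcal{H}^{n-1}(\partial D)\gtrsim \lambda\,\|u\|_{L^1(D)}/\|\nabla u\|_{L^\infty(D)}$ per nodal domain (which is also an immediate consequence of the divergence theorem plus $|\nabla u|\le\|\nabla u\|_{L^\infty(D)}$ on $\partial D$). Up to that point the two arguments are compatible, though your Kato/Duhamel framing is genuinely different in spirit from the paper's stochastic $\Xi$-process and heat-content asymptotics.

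The decisive difference is what happens next, and there your proposal has a real gap. The paper simply plugs in the Sogge--Zelditch inequality $\|u\|_{L^1(M)}/\|\nabla u\|_{L^\infty(M)}\gtrsim\lambda^{-(n+1)/4}$ (equivalently, $\|u\|_{L^1}\gtrsim\lambda^{-(n-1)/4}\|u\|_{L^\infty}$), which is a nontrivial \emph{ratio} bound, sharp for spherical harmonics, and this is the sole external ingredient producing the exponent $(3-n)/4$. You instead use only Sogge's $L^\infty$ bound and the H\"older consequence $\|u\|_{L^1}\gtrsim\lambda^{-(n-1)/4}$, which together give merely $\lambda^{(2-n)/2}$ --- and you acknowledge this, but the proposed remedy (a Cauchy--Schwarz step on $Z$ followed by an as-yet-unspecified heat-kernel bound on $\int_Z|\nabla u|^2$) is not carried out and is in fact the entire content of the theorem. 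This is the Colding--Minicozzi-type route, not the paper's, and its hard step --- a sharp $L^2$ estimate on the restriction $|\nabla u|\big|_Z$ --- is left as a heuristic about ``local linearity across $Z$.'' Without either that missing lemma or the Sogge--Zelditch ratio inequality, the proof does not close. (Two small further issues: the loss in your naive estimate is $\lambda^{(n-1)/4}$, not $\lambda^{1/4}$, except when $n=2$; and after Cauchy--Schwarz one needs $\int_Z|\nabla u|^2$ bounded roughly by $\lambda^{(7-n)/4}$ in the normalization $\|u\|_{L^2}=1$, which is far below the crude bound $\|\nabla u\|_\infty^2|Z|$ and is not a formality.)
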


This was first proven by Colding \& Minicozzi \cite{c}. Subsequently, different proofs were given by Hezari \& Sogge \cite{hso}, 
Hezari \& Wang \cite{hw} (for $n \leq 5$) and Sogge \& Zelditch \cite{sz2}. The arguments tend to be either local estimates on small balls in the style of Donnelly-Fefferman
or global integral formulae. It is the purpose of this paper to give a new local approach exploiting the fact that
a Laplacian eigenfunction behaves nicely under the heat flow. The approach is fully self-contained with the exception of our using
a global inequality due to Sogge \& Zelditch \cite{sz2} 
$$  \frac{\|u\|_{L^1(M)}}{\|\nabla u\|_{L^{\infty}(M)}} \gtrsim \lambda^{-\frac{n+1}{4}},$$
which is known to be sharp on spherical harmonics. We also sketch a variant of our proof that comes
to rely on $\|u\|_{L^1(M)} \gtrsim  \lambda^{\frac{1-n}{4}}\|u\|_{L^{\infty}(M)}$ (also used
by Sogge \& Zelditch \cite{sz1}), which is easily seen to be equivalent because of $\|\nabla u\|_{L^{\infty}(M)} \sim \lambda^{1/2}\|u\|_{L^{\infty}(M)}.$   \\

As a by-product we show that, for $c$ sufficiently small, a nodal domain cannot be contained in a $c\lambda^{-1/2}-$neighbourhood of
a sufficiently flat $(n-1)-$dimensional surface in $M$. The statement is easily seen to be sharp because there are eigenfunctions on the flat torus $\mathbb{T}^2$ such that
any nodal domains is contained in a $C\lambda^{-1/2}-$neighbourhood of a geodesic of length 1. We also give a sub-optimal local version of the previous local result: 
if two line segments contained in the nodal set are contained in a thin rectangle, then the rectangle has bounded eccentricity. Finally,
we formulate two very hard conjectures in the spirit of our approach whose resolution would imply a slightly sharper version of Yau's conjecture.

\section{Idea and Statement of results}

\subsection{The main idea.} 
The main idea is as follows: the heat equation with a Laplacian eigenfunction as initial data and Dirichlet
conditions on the nodal set has the explicit solution 
$$ (\partial_t - \Delta_g)e^{-\lambda t}u(x)=0.$$
This is also the solution of the heat flow without any boundary conditions, however, we will be working locally. In particular, we have precise control on
the rate of decay of the $L^1-$norm in time. A natural candidate for comparison is the heat equation with
the same initial data but Neumann conditions on the nodal set, which conserves $L^1$.
However, the entire difference between Dirichlet and Neumann heat flow is caused by the existence of the nodal set and if it was
too small, it couldn't account for the difference in behavior.\\

Our proof is not actually using the Neumann solution because it requires some regularity on the boundary
and would necessitate using reflected Brownian motion whose construction is nontrivial around the singular set
$$ \left\{x \in M: u(x) = 0 = \nabla u(x)\right\}.$$
Instead, we choose another way and construct a stochastic process which might be interesting in itself: 
for small times it acts as a diffusion but as time grows the process converges back to initial data. We don't
expect any serious obstacles if one were to work with actual Neumann solutions, some
remarks on how to proceed are sketched in the last section of the paper.

\subsection{Bounds on nodal sets.}
We give a new proof of the currently optimal lower bound on the length of nodal sets.
\begin{theorem} We have
$$ \mathcal{H}^{n-1}\left(\left\{x \in M: u(x) = 0\right\}\right) \gtrsim \lambda^{\frac{3-n}{4}}.$$
\end{theorem}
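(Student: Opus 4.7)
The plan is to compare two heat-flow-like evolutions acting on $u$: one governed by the usual Laplacian (which simply decays like $e^{-\lambda t}$) and one that is artificially modified so as to preserve the $L^{1}$-mass. Any mismatch between the two must then be accounted for by mass transport across the nodal set, and this translates directly into a lower bound on $\mathcal{H}^{n-1}(Z)$.

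First I would note that $v(x,t) = e^{-\lambda t} u(x)$ solves $(\partial_t - \Delta_g)v = 0$ with initial datum $u$, so that
$$\|v(\cdot,t)\|_{L^1(M)} = e^{-\lambda t}\|u\|_{L^1(M)},$$
and in particular the $L^{1}$-mass lost during a small time interval of length $t$ is comparable to $\lambda t \|u\|_{L^1(M)}$. Next I would introduce a comparison process that has the same infinitesimal behaviour as the heat flow away from $Z$ but cannot transfer mass across $Z$. Morally this is Neumann heat flow on each nodal domain; to avoid the regularity issue at the singular set $\{u = 0 = \nabla u\}$ I would follow the introduction's suggestion and build a stochastic process that behaves like Brownian motion for short times yet relaxes back to its starting point instead of crossing the zero set. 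By construction this evolution preserves $\|u\|_{L^1(M)}$.

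The difference between the two evolutions after time $t$ then comes entirely from mass swept across $Z$ by diffusion. Since a Brownian particle typically travels distance $\sqrt{t}$, only points in a $\sqrt{t}$-tube around $Z$ can contribute, and on that tube $|u(x)| \lesssim \|\nabla u\|_{L^\infty(M)} \operatorname{dist}(x,Z)$, so
$$\int_{\operatorname{dist}(x,Z) < \sqrt{t}} |u(x)|\, dx \lesssim \mathcal{H}^{n-1}(Z)\cdot \|\nabla u\|_{L^\infty(M)}\cdot t.$$
Equating this with the mass loss from the first step, dividing by $t$, and invoking the Sogge--Zelditch inequality $\|u\|_{L^1(M)}/\|\nabla u\|_{L^\infty(M)} \gtrsim \lambda^{-(n+1)/4}$ yields
$$\mathcal{H}^{n-1}(Z) \gtrsim \lambda \cdot \frac{\|u\|_{L^1(M)}}{\|\nabla u\|_{L^\infty(M)}} \gtrsim \lambda^{(3-n)/4}.$$

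The hard part is the second step: defining a comparison process that both preserves $L^{1}$-mass and diffuses at the correct Brownian rate on short time scales without breaking down near the singular set $\{u=0=\nabla u\}$. A related subtlety is verifying that the genuine mass exchange across $Z$ is in fact comparable to $\mathcal{H}^{n-1}(Z)\cdot \|\nabla u\|_{L^\infty(M)}\cdot t$ rather than strictly smaller; one essentially needs the constructed process to saturate the available flux through the nodal tube, and this appears to be precisely what the auxiliary diffusion sketched in the introduction is engineered to ensure.
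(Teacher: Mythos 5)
Your approach is essentially the same as the paper's: compare the Dirichlet heat flow $e^{t\Delta_D}u = e^{-\lambda t}u$ on each nodal domain with an artificial $L^1$-conserving diffusion, control the discrepancy by the mass of $u$ in a $\sqrt{t}$-tube around $Z$ via the mean-value bound $|u(x)| \lesssim d(x,Z)\|\nabla u\|_{L^\infty}$, and close with the Sogge--Zelditch inequality. The paper makes the first of your two ``hard parts'' explicit with a clean construction: defining, via Feynman--Kac on a nodal domain $D$,
$$(e^{t\Xi}f)(x) := \mathbb{E}_{x}\bigl(f(\omega(t))\psi_{D}(\omega, t)\bigr) + \mathbb{E}_{x}\bigl(1-\psi_{D}(\omega, t)\bigr)f(x),$$
one gets the exact identity $e^{t\Xi}u - e^{t\Delta_D}u = p_t(x)\,u(x)$ where $p_t(x)$ is the hitting probability of $\partial D$, and $\int_D e^{t\Xi}u = \int_D u$ follows from the symmetry of the heat kernel. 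This gives you the inequality $\int_D p_t \gtrsim (1-e^{-\lambda t}) t^{-1/2}\,\|u\|_{L^1(D)}/\|\nabla u\|_{L^\infty(D)}$ without needing to ``saturate the flux'' as you worry about.

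The step you pass over silently, and which the paper does have to work for, is your inequality
$$\int_{d(x,Z)<\sqrt{t}} |u(x)|\,dx \lesssim \mathcal{H}^{n-1}(Z)\cdot t\cdot\|\nabla u\|_{L^\infty}.$$
This requires the \emph{upper} bound $\bigl|\{x: d(x,Z) < \sqrt{t}\}\bigr| \lesssim \mathcal{H}^{n-1}(Z)\sqrt{t}$, i.e.\ a bound on the upper Minkowski content of $Z$ by its Hausdorff measure. That is false for general closed sets and genuinely uses the structure of nodal sets: away from the singular set $\{u=0=\nabla u\}$ the nodal set is a $C^\infty$ hypersurface where the estimate is local and elementary, and the singular set has vanishing $(n-1)$-dimensional Minkowski content (Cheeger--Naber--Valtorta even control its $(n-2)$-dimensional content). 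The paper handles this by covering $\partial D$ with cubes of side $\sqrt{t}$, splitting into regular and singular cubes, and showing the $o(t^{-(n-1)/2})$ singular cubes contribute $o(1)$ after the $t^{-1/2}$ normalization. You should add this decomposition; the rest of the outline is sound.
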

The (purely local) proof will give a sum consisting of local terms over all nodal domains $D$, the 
lower bound is then implied by a global inequality due to Sogge \& Zelditch \cite{sz2} 
$$ \lambda \sum_{D}{\frac{\|u\|_{L^1(D)}}{\|\nabla u\|_{L^{\infty}(D)}}} \geq \lambda \frac{\|u\|_{L^1(M)}}{\|\nabla u\|_{L^{\infty}(M)}} \gtrsim \lambda^{\frac{3-n}{4}}.$$
We expect the argument to be applicable to more general diffusion processes and possibly other 
questions about Laplacian eigenfunctions. A natural question is whether the argument can be
extended to eigenfunctions of the fractional Laplacian if one were to define it as the symbol
associated with the L\'{e}vy jump process.

\subsection{Geometry of nodal sets.} 
One example is the shape of nodal domains, where we briefly describe a simple geometric result.
It deals with the question whether nodal domains can be contained
in a small neighbourhood of a 'flat' surface of codimension 1. In two dimensions, the statement reduces
to a statement mentioned by Mangoubi \cite{m2} and to a theorem of Hayman \cite{hay},
however, in this generality it seems to be new.\\

Let $\Sigma \subset M$ be an arbitrary smooth $(n-1)-$dimensional surface. We ask whether a nodal
domain can be contained in a small neighbourhood of $\Sigma$. The $\varepsilon-$neighbourhood of
a generic geodesic (being itself as 'flat' as possible) on the torus $\mathbb{T}^2$ already coincides with the entire torus -- we therefore need to
place some restrictions on $\Sigma$ for the question to be meaningful. Using $d_{g}(\cdot, \cdot)$
to denote the geodesic distance, we call $\Sigma$ admissible up to distance $r$ if
$$ \forall x \in M: d_{g}(x, \Sigma) \leq r \quad \implies  \quad \#\left\{y \in \Sigma: d(x,y) = d(x, \Sigma)\right\} = 1.$$
This precludes the scenario of dense geodesics and implies that $\Sigma$ is essentially flat
at length scales smaller than $r$. Our next theorem states that a nodal domain cannot
be much flatter than the wavelength $\lambda^{-1/2}$ in any direction.

\begin{theorem} There is a constant $c > 0$ depending only on $(M,g)$ such that if 
$\Sigma \subset M$ is admissible up to distance $\lambda^{-1/2}$, then no nodal domain
can be a subset of the $c\lambda^{-1/2}-$neighbourhood of $\Sigma$.
\end{theorem}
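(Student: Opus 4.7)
The plan is to compare the Dirichlet heat flow on a hypothetical thin nodal domain with that on the ambient tube, and to exploit that thin tubes have large first Dirichlet eigenvalues. Assume for contradiction that some nodal domain $D$ is contained in the tube
$$
N := \{x \in M: d_g(x,\Sigma) < c\lambda^{-1/2}\};
$$
without loss of generality $u > 0$ on $D$, and extend $u$ by zero to obtain $\tilde u \in L^2(N)$ supported in $\bar D$.

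First I would write down two heat flows. Denoting by $T_t^D$ and $T_t^N$ the semigroups of $\Delta_g$ with Dirichlet conditions on $\partial D$ and $\partial N$ respectively, the fact that $u$ is the first Dirichlet eigenfunction of $D$ gives $T_t^D u = e^{-\lambda t}u$ on $D$, and the pointwise comparison $T_t^D \tilde u \leq T_t^N \tilde u$ on $N$ (which follows from the probabilistic representation, since $\tau_D \leq \tau_N$ when $D \subset N$ and $\tilde u \geq 0$) together with the spectral bound $\|T_t^N\|_{L^2(N) \to L^2(N)} \leq e^{-\lambda_1^{\mathrm{Dir}}(N)\,t}$ yields
$$
e^{-\lambda t}\|u\|_{L^2(D)} = \|T_t^D \tilde u\|_{L^2(N)} \leq \|T_t^N \tilde u\|_{L^2(N)} \leq e^{-\lambda_1^{\mathrm{Dir}}(N)\,t}\|u\|_{L^2(D)}.
$$
Letting $t \to \infty$ forces $\lambda \geq \lambda_1^{\mathrm{Dir}}(N)$.

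The next step is a lower bound on $\lambda_1^{\mathrm{Dir}}(N)$ from the thinness of the tube. Admissibility up to distance $\lambda^{-1/2}$ implies that the nearest-point projection $\pi: N \to \Sigma$ is a smooth fibration and that the principal curvatures of $\Sigma$ are $\lesssim \lambda^{1/2}$; in Fermi coordinates $(y,s) \in \Sigma \times (-c\lambda^{-1/2}, c\lambda^{-1/2})$ the metric reads $g_{\Sigma,s}(y) + ds^2$ and the Jacobi-field equation yields $\sqrt{\det g_{\Sigma,s}}/\sqrt{\det g_{\Sigma,0}} \in [C_0^{-1}, C_0]$ with $C_0 = C_0(c,(M,g)) \to 1$ as $c \to 0$. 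A slice-wise one-dimensional Poincar\'e inequality then gives
$$
\int_N |\nabla_g v|^2\,dV_g \geq \frac{\pi^2 \lambda}{4 c^2 C_0} \int_N v^2\,dV_g \qquad (v \in H_0^1(N)),
$$
so $\lambda_1^{\mathrm{Dir}}(N) \geq \pi^2 \lambda /(4c^2 C_0)$. Combined with the heat-flow inequality above, this yields $1 \geq \pi^2/(4c^2 C_0(c))$, which fails once $c$ is a sufficiently small constant depending only on $(M,g)$, producing the contradiction.

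The step I expect to be most delicate is the Jacobi-field comparison that keeps $C_0$ bounded uniformly in $\lambda$: the surface $\Sigma$ may have second fundamental form as large as $\lambda^{1/2}$, but admissibility precisely constrains the scale-invariant product $|s|\cdot|\mathrm{II}_\Sigma|$ in the tube so that the tube metric is comparable, with constants independent of $\lambda$, to a product metric. Once this geometric control is in place, the heat-flow comparison and the slice-wise Poincar\'e inequality are essentially routine.
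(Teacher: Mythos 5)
Your argument is correct, and it takes a genuinely different route from the paper's. The paper's proof is probabilistic: via Feynman--Kac it shows that at a maximum point $x$ of $u$ in $D$ one must have $p_{\lambda^{-1}}(x) \leq 1 - e^{-1}$, where $p_t(x)$ is the probability of the Brownian particle starting at $x$ hitting $\partial D$ before time $t$; it then uses the reflection principle for a one-dimensional Brownian motion to show that in a tube of half-width $c\lambda^{-1/2}$ the escape probability at time $\lambda^{-1}$ is at least $1 - \sqrt{2/\pi}\,c$, which contradicts the first bound once $c$ is small. You replace the hitting-probability estimate by the domain monotonicity $\lambda = \lambda_1^{\mathrm{Dir}}(D) \geq \lambda_1^{\mathrm{Dir}}(N)$ (which your heat-semigroup computation proves, though the Rayleigh quotient applied to $\tilde u \in H^1_0(N)$ would give it in one line) and replace the reflection-principle computation by a slice-wise Poincar\'e bound $\lambda_1^{\mathrm{Dir}}(N) \gtrsim \lambda/c^2$. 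The mechanism is the same --- a thin tube has a large first Dirichlet eigenvalue and a nodal domain has exactly $\lambda$ --- but your variational version is cleaner and, in the flat case $C_0 = 1$, produces the threshold $c < \pi/2$, which exactly matches the torus example (a nodal strip of width $\pi\lambda^{-1/2}$), whereas the paper's density bound gives the smaller constant $\sqrt{\pi}/(e\sqrt{2})$. The step you flag as delicate is indeed the real content but does go through: admissibility up to distance $\lambda^{-1/2}$ forces the focal radius of $\Sigma$ to be $\geq \lambda^{-1/2}$, hence the principal curvatures satisfy $|\kappa_i| \leq \lambda^{1/2}$, so throughout the $c\lambda^{-1/2}$-tube one has $|s\kappa_i| \leq c$ and the Fermi Jacobian stays within $(1 \pm c)^{n-1}$ up to ambient-curvature corrections which are $O(c^2\lambda^{-1})\cdot\lambda = O(c^2)$ at this scale; this gives $C_0(c) \to 1$ as $c \to 0$ as you claimed. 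One small caveat shared with the paper's argument: the Fermi foliation is clean when $\Sigma$ is closed or when $D$ stays away from $\partial\Sigma$; neither proof addresses end effects if $\partial\Sigma \neq \emptyset$, but this is a shared implicit assumption rather than a defect specific to your approach.
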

As mentioned above, the function $u(x) = \mbox{Re}\exp(i\sqrt{\lambda} x)$ on $\mathbb{T}^2$
endowed with the flat metric has all its nodal domains contained in a $0.5 \lambda^{-1/2}$
neighbourhood of a geodesic of length 1 (being admissible up to $r=0.5$) and the example 
easily generalizes to higher dimensions.\\

It is not difficult to see (from the proof) that the statement can be generalized to the 
union of admissible sets assuming they are sufficiently transversal at points of intersection
and assuming the complement of the union does not have small connected components. Indeed, 
the proof immediately carries over to the following classical theorem (where the inradius
is defined as the radius of the largest ball fully contained in the domain).
\begin{thm}[Hayman, \cite{hay}] There exists a constant $c \geq 900^{-1}$ such that for any simply
connected domain $\Omega \subset \mathbb{R}^2$ with inradius $\rho$
 $$ \lambda_1(\Omega) \geq \frac{c}{\rho^2}.$$
\end{thm}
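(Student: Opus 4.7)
The Dirichlet ground state $u>0$ on $\Omega$ has eigenvalue $\lambda_1(\Omega)$ and, being positive, makes $\Omega$ its own unique nodal domain. If $\rho$ denotes the inradius, then every point of $\Omega$ lies within distance $\rho$ of $\partial\Omega$, since otherwise a ball of radius greater than $\rho$ centred at that point would fit inside $\Omega$. Hence $\Omega$ is contained in the $\rho$-neighbourhood of the codimension-one set $\partial\Omega$, which is precisely the configuration of the preceding theorem with $\Sigma=\partial\Omega$ and thickness $\rho$. The plan is to run that heat-flow comparison with $\rho$ playing the role of $c\lambda^{-1/2}$, so that a failure of the conclusion forces $\lambda_1(\Omega)\rho^{2}\lesssim 1$.

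The comparison itself is as follows: the Dirichlet heat flow inside $\Omega$ with initial datum $u$ is explicitly $e^{-\lambda_1(\Omega)t}u$, which loses a definite fraction of its $L^1$-mass on the time scale $t\sim\lambda_1(\Omega)^{-1}$, while the artificial mass-conserving diffusion constructed in the paper spreads on length scale $\sqrt{t}$. If $\sqrt{t}\gg\rho$, a particle started at an arbitrary point of $\Omega$ is forced to interact with $\partial\Omega$, so mass conservation becomes incompatible with the exponential decay of the Dirichlet flow. Matching the two scales gives $\lambda_1(\Omega)\rho^{2}\gtrsim 1$, and a careful bookkeeping of the constants in the admissible surface argument should recover the explicit Hayman value $c\ge 1/900$.

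The one place where the transfer is not literal is that $\partial\Omega$ is not an admissible surface in general: the medial axis of a simply connected planar domain can lie arbitrarily close to $\partial\Omega$, so the nearest-point projection fails to be single valued exactly where the argument wants to use it. This is where the two hypotheses of the theorem --- simply connected and two-dimensional --- are essential, as witnessed by the failure of Hayman's inequality for the thin annulus. The remedy I would pursue is to cut $\Omega$ along its medial axis into simply connected pieces on each of which a Fermi coordinate system over a sub-arc of $\partial\Omega$ does exist, run the artificial diffusion piecewise, and use simple connectedness to arrange that the internal cuts contribute no net mass flux. I expect this medial-axis decomposition to be the only delicate point in the argument; once it is in place the rest of the proof is identical to that of the admissible surface theorem.
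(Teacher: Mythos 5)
Your high-level plan --- apply the argument of Theorem~2 to the Dirichlet ground state with $\Sigma = \partial\Omega$ --- is indeed what the paper gestures at with ``the proof immediately carries over,'' and you correctly spot that $\partial\Omega$ is not an admissible surface. But the mechanism you describe is not the one that makes the transfer work, and the remedy you propose for that gap is off the mark. A bookkeeping point first: the mass-conserving operator $\Xi$ plays no role here --- it appears only in the proof of Theorem~1. The Hayman argument rests on the short Feynman--Kac lemma from the proof of Theorem~2: if $u>0$ is the ground state with eigenvalue $\lambda_1$ and $x$ is a maximum of $u$, then $e^{-\lambda_1 t}\|u\|_{L^\infty} = \mathbb{E}_x\bigl(u(\omega(t))\psi_\Omega(\omega,t)\bigr)\le \|u\|_{L^\infty}\,\bigl(1-p_t(x)\bigr)$, i.e.\ $p_t(x)\le 1-e^{-\lambda_1 t}$. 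What is then needed is the matching lower bound $p_{C\rho^2}(x)\ge c_0$ for an absolute $c_0>0$; combining the two at $t=C\rho^2$ gives $\lambda_1\rho^2\gtrsim 1$.

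It is this lower bound that your medial-axis decomposition is aimed at, and it will not deliver it. You cannot ``run the diffusion piecewise'': Brownian motion in $\Omega$ crosses the medial axis freely, and imposing any flux condition on the cuts changes the process to something other than Brownian motion in $\Omega$ absorbed at $\partial\Omega$, which is the only process the Feynman--Kac lemma controls. The reduction to a one-dimensional exit problem in Theorem~2 was available precisely because admissibility gave a single-valued nearest-point projection; for a rough $\partial\Omega$ the correct substitute is a genuinely two-dimensional hitting estimate, and it is there that both hypotheses of the theorem enter. Since the inradius is $\rho$, every $x\in\Omega$ lies within $\rho$ of $\partial\Omega$. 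Since $\Omega$ is simply connected, $\Omega^c$ is connected in $S^2$, so the component of $\Omega^c\cap\overline{B(x,2\rho)}$ containing the nearest boundary point must reach $\partial B(x,2\rho)$ and hence has diameter $\ge\rho$; a Beurling-type projection estimate then gives an absolute lower bound on the probability that a planar Brownian motion from $x$ hits this set before leaving $B(x,2\rho)$, and the exit from $B(x,2\rho)$ happens within time $C\rho^2$ with high probability, yielding $p_{C\rho^2}(x)\ge c_0$. This is what the paper means by ``the complement of the union does not have small connected components,'' and it is the step your plan is missing. (Also, the thin annulus is not a counterexample in the non-simply-connected case: both its inradius and $\lambda_1^{-1/2}$ scale like the thickness, so Hayman's inequality holds for it; the standard counterexample is a disk perforated by a fine lattice of tiny holes, whose inradius is small but whose eigenvalue stays close to that of the unperforated disk.)
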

Note that the assumption of being simply connected is crucial but can be relaxed provided the
'holes' aren't too small - this is the case for Laplacian eigenfunctions, where the lack
of simple connectivity comes from other nodal domains, which cannot be too small themselves.

\subsection{Avoided crossings.} We now assume the manifold to be two-dimensional
and give a simple result to illustrate the method, which is not optimal but has a very simple proof.
The nodal set consists of lines -- if two line segments from contained in
the nodal set are very close to each other, they can either intersect and create a singular point 
or be close to each other along a short line segment. 
\begin{center}
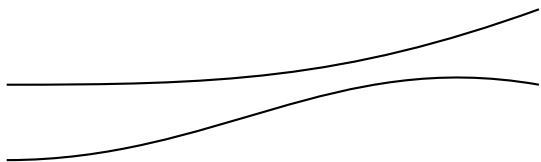
\begin{figure}[h!]
\begin{tikzpicture}
\draw[thick] (0,0) to [out=0,in=200] (7,1);
\draw[thick] (0,-1) to [out=0,in=170] (7,0);
\end{tikzpicture}
\caption{Two nodal lines almost crossing.}
\end{figure}
\end{center}

In the second case, sometimes termed 
'avoided crossings' \cite{mo}, it is natural to ask for bounds on the length of the line segment. 
Polynomial bounds follow from early work by Donnelly \& Fefferman \cite{df3}, which has been
recently refined by Mangoubi \cite{m2}. If $(M,g)$ is a $C^{\infty}-$surface he shows (among other
things) that two nodal lines cannot be at distance $\lambda^{-1}$ along a line segment of length 
$\lambda^{-1/2}\sqrt{\log{\lambda}}$. We give a relatively short argument showing the same result 
up to a loss of a factor $\sqrt{\log{\lambda}}$.

\subsection{Cone condition.} In $n=2$, an older result of Cheng \cite{cheng} (or, implicitely, a
1955 theorem of Lipman Bers) implies that a nodal domain satisfies an interior cone condition with 
opening angle $\lambda^{-1/2}$. This bound is
attained for spherical harmonics, in particular the function which locally looks like 
$$ \mbox{Re}(x+iy)^{\sqrt{\lambda}}$$
around the origin. We give a very simple proof of the statement using results on
hitting probabilities for Brownian motion in cones. The argument is very general and also applies
in higher dimensions and different shapes.

\subsection{Open questions.} The main idea of getting information on the nodal set by constructing
diffusion processes which deviate in behavior at the boundary -- because its infinitesimally
generated particles are absorbed/reflected differently -- relies on having the right concepts.
As it turns out, a rather crucial concept is the notion of heat content, which seems to
have been studied by many people but in a predominantly asymptotic sense (see, e.g. \cite{vdb}).
We conjecture an isoperimetric principle for the heat content and show how such a principle 
would imply yet another proof of the bound on the length of nodal sets. It also allows for a natural refinement of
Yau's conjecture and seems to be the suitable for proving a Lieb-type generalization of Hayman's
theorem -- this is discussed in the last section of the paper.\\

 As for notation, the symbols $\lesssim$ and 
$\sim$ will always denote absolute constants depending only on the manifold $(M,g)$.

\section{Proof of the Theorem 1.}
\subsection{Heat content.} Given an open subset $N \subset M$, we use $p_t(x)$ to denote
the solution to the following heat equation
\begin{align*}
 (\partial_t - \Delta_g)p_t(x) &= 0 \quad \quad \quad x  \in N\\
p_t(x) &= 1 \quad \quad \quad x  \in \partial N \\
p_0(x) &= 0 \quad \quad \quad x  \in  N.
\end{align*}
The Feynman-Kac formula implies that this can be understood as the probability that a Brownian motion particle started in $x$ will hit the
boundary within $t$ units of time. The quantity
$$\int_{N}{p_{t}(x)dx}$$
is called the heat content of $N$ at time $t$. It can be seen as a 'soft' measure of boundary size -- for large times the function will be
roughly of size 1 in the entire domain and all information on the size of the boundary will be lost. However, within $t$ units of time a typical Brownian motion particle travels a distance of $\sim t^{1/2}$.
This can be immediately seen with Varadhan's large deviation formula \cite{var}
$$ \lim_{t \rightarrow 0}{-4t\log K(t,x,y)} = d(x,y)^2,$$
where $K(\cdot,\cdot,\cdot)$ is the heat kernel on the manifold $(M,g)$. We also refer to standard Gaussian estimates on the heat kernel 
(see for example the book of Grigoryan \cite{gri}). For small times $t$, the function $p_t(x)$ is essentially supported in a neighbourhood of size $\sim t^{1/2}$
from the boundary and is superexponentially decaying after that: this is what yields a connection to the size of the boundary. 
 In particular, we will prove that for nodal domains
 $N \subset M$ as $t \rightarrow 0$
$$\int_{N}{p_{t}(x)dx} \sim \sqrt{t}\mathcal{H}^{n-1}(\partial D).$$
Much more precise results are known for domains with $C^{\infty}-$boundary. Around 1970, Greiner \cite{greiner} and Seeley \cite{seeley}
independently showed that there exists an asymptotic series 
$$\int_{N}{p_{t}(x)dx} \sim \sum_{n=1}^{\infty}{a_{n}(N)t^{\frac{n}{2}}} \qquad \mbox{as}~t \rightarrow 0^+.$$
There has been some interested in expressing the initial coefficients in terms of geometric quantities of $N$: this
can indeed be done in the smooth context (see, for example, the survey of Gilkey \cite{gilkey}).

\subsection{Definitions.} Let $D$ be an arbitrary nodal domain. Without loss of generality, 
we assume the eigenfunction $u(x)$ to be positive within $D$: otherwise consider $-u(x)$.
Given $u(x)$, we define $v(t,x)$ as solution to the heat equation with $u(x)\big|_{D}$ as initial 
data and Dirichlet condition on the boundary. We set 
$$v(t,x) := e^{-\lambda t}u(x)$$
and note that $v(t,x)$ then solves
\begin{align*}
 (\partial_t - \Delta_g)v(t,x) &= 0 \quad \mbox{on}~D \setminus \left\{u(x)=0\right\}\\
v(t,x) &= 0 \quad \mbox{on}~\left\{u(x)=0\right\}\\
v(0,x) &= u(x) \quad \mbox{on}~D.
\end{align*}

A relevant classical concept is the Feynman-Kac formula for the Dirichlet problem (see e.g. Taylor \cite{tay}),
which allows to rewrite a deterministic diffusion process as an expectation over the behavior of random
variables: given an open domain $\Omega \in \mathbb{R}^n$,
$f \in L^2(\Omega)$, $x \in \Omega$ and $t > 0$, then
$$ (e^{t\Delta_D}f)(x) = \mathbb{E}_{x}(f(\omega(t))\psi_{\Omega}(\omega, t)),$$
where $t > 0$ is arbitrary, $\omega(t)$ denotes an element of the probability space of Brownian 
motions starting in $x$, $\mathbb{E}_x$
is to be understood with regards to the measure of that probability space and
$$ \psi_{\Omega}(\omega,t) = \begin{cases}
                              1 \qquad &\mbox{if}~\omega([0,t]) \subset \Omega \\
                              0 \qquad &\mbox{otherwise.}
                             \end{cases}$$
Here we see the connection with the heat content: for any point $x \in \Omega$ and any $t>0$
$$ \mathbb{E}_x(\psi_{\Omega}(\omega,t)) = 1-p_t(x).$$
For reasons that will become apparent in the proof, for $f \in C^{\infty}_0(\Omega)$ we define a second 'diffusion'
operator $\Xi$ via 
$$(e^{t\Xi}f)(x) := \mathbb{E}_{x}(f(\omega(t))\psi_{\Omega}(\omega, t)) + \mathbb{E}_{x}(1-\psi_{\Omega}(\omega, t))f(x).$$
This operator is initially smoothing but ceases being so as time progresses. Indeed, if $\Omega \subset M$ such that $M \setminus \Omega$
is open, then if $\Xi$ is adapted to $\Omega$ it is easy to see that for every $x \in \Omega$
$$ \lim_{t \rightarrow \infty}{(e^{t\Xi}f)(x)} = f(x).$$
Finally, we claim that
$$ \int_{\Omega}{e^{t\Xi}f dx} = \int_{\Omega}{f dx},$$
which is equivalent to
$$ \int_{\Omega}{p_t(x)u(x)dx} = \int_{\Omega}{(1-e^{t\Delta_D})u(x)dx}.$$
A stochastic argument would be to say that among paths not leaving the domain, it is
equally likely to start in a point $x$ and end in a point $y$ than the other way around -- 
a statement that follows from the symmetry $K(t,x,y) = K(t,y,x)$ of the heat kernel.

\subsection{A Comparison Lemma.} We are interested in comparing the behavior of the Dirichlet
solution $e^{t\Delta_D}u$ with the behavior of $e^{t\Xi}u$ on a fixed nodal domain $D$, where we assume
without loss of generality that $u\big|_{D} \geq 0$ (otherwise: consider $-u(x)$). It is obvious from
the definition that
$$ e^{t\Xi}u \geq e^{t\Delta_D}u.$$
\begin{lemma}
 There exists a constant $C > 0$ depending only on $(M,g)$ such that
$$ \int_{D}{e^{t\Xi}u(x) - e^{t\Delta_D}u(x)dx} \leq C  \|\nabla u\|_{L^{\infty}} t^{1/2}
 \int_{D}{p_t(x) dx}.$$
\end{lemma}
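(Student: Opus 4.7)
The plan is to compute $e^{t\Xi}u - e^{t\Delta_D}u$ explicitly from the two Feynman--Kac representations and then bound it via a Brownian motion argument that exploits the vanishing of $u$ on $\partial D$. Observe that the first summand in $(e^{t\Xi}u)(x)$ is exactly $(e^{t\Delta_D}u)(x)$, so the two expressions differ only by the second summand of $\Xi$, which is deterministic in $u(x)$:
$$ (e^{t\Xi}u)(x) - (e^{t\Delta_D}u)(x) = \mathbb{E}_x\bigl[(1-\psi_D(\omega,t))\,u(x)\bigr] = u(x)\,p_t(x). $$
The entire lemma thus reduces to bounding this product by $C\sqrt{t}\,\|\nabla u\|_{L^\infty}\,p_t(x)$.

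Next, I would reformulate the left-hand side as an expectation over the Brownian paths that actually reach the boundary. If $\tau$ denotes the first hitting time of $\partial D$, then $u(\omega(\tau))=0$ on $\{\tau \leq t\}$, hence
$$ u(x)\,p_t(x) = \mathbb{E}_x\bigl[(u(x) - u(\omega(\tau)))\,\mathbf{1}\{\tau \leq t\}\bigr] \leq \|\nabla u\|_{L^\infty}\,\mathbb{E}_x\bigl[d_g(x,\omega(\tau))\,\mathbf{1}\{\tau \leq t\}\bigr] $$
by the mean value inequality. On $\{\tau \leq t\}$ the displacement $d_g(x,\omega(\tau))$ is dominated by the running maximum $M_t := \sup_{0 \leq s \leq t} d_g(x,\omega(s))$, and the task becomes showing $\mathbb{E}_x[M_t\,\mathbf{1}\{\tau\leq t\}] \lesssim \sqrt{t}\,p_t(x)$.

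The last step I would carry out using the layer-cake formula and Gaussian tail bounds. For $r \leq d_g(x,\partial D)$ the event $\{\tau\leq t\}$ already forces $M_t \geq r$, contributing $d_g(x,\partial D)\,p_t(x)$; for $r > d_g(x,\partial D)$ the reflection principle on $(M,g)$, combined with the Gaussian heat-kernel estimates cited via Varadhan and Grigoryan, yields $\mathbb{P}_x(M_t \geq r) \lesssim e^{-cr^2/t}$, contributing $\sqrt{t}\,e^{-c\,d_g(x,\partial D)^2/t}$. The main obstacle is collapsing both pieces back to $\sqrt{t}\,p_t(x)$: the elementary inequality $r e^{-cr^2/t} \lesssim \sqrt{t}$ handles the far-field tail once one invokes the matching Gaussian lower bound $p_t(x) \gtrsim e^{-C\,d_g(x,\partial D)^2/t}$ for the hitting probability, and the same lower bound absorbs the near-boundary term. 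Such two-sided Gaussian estimates on $p_t$ are classical on a smooth portion of $\partial D$; the genuinely delicate point is the singular set $\{u = 0 = \nabla u\}$ on the nodal set, where a Neumann reflection would be problematic, but Brownian motion is still well behaved since the set has codimension at least two and is polar -- which is precisely the reason the artificial operator $\Xi$ was introduced in the first place.
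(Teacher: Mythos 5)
Your reduction to the identity $e^{t\Xi}u - e^{t\Delta_D}u = u(x)\,p_t(x)$ is correct and is the same first step as the paper's proof, and rewriting $u(x)p_t(x) = \mathbb{E}_x[(u(x)-u(\omega(\tau)))\mathbf{1}\{\tau\le t\}]$ is a nice probabilistic sharpening of the paper's bare invocation of the mean value inequality $u(x)\le d(x,\partial D)\|\nabla u\|_{L^\infty}$. Up to this point your argument is sound.

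The absorption step, however, does not close — and cannot be made to close as a pointwise estimate. You correctly identify the two contributions $d(x,\partial D)\,p_t(x)$ and $\sqrt{t}\,e^{-c\,d(x,\partial D)^2/t}$, but the near-boundary piece $d(x,\partial D)\,p_t(x)$ is bounded by $\sqrt{t}\,p_t(x)$ \emph{only} when $d(x,\partial D)\lesssim\sqrt{t}$; no two-sided Gaussian estimate on $p_t$ changes this, because both sides of the proposed inequality carry the same factor $p_t(x)$, and the desired cancellation would force $d(x,\partial D)\lesssim\sqrt{t}$ for all $x\in D$, which is simply false. The half-space model makes this explicit: there $p_t(x)\sim(\sqrt{t}/d)e^{-d^2/(4t)}$ with $d = d(x,\partial D)$, so $d\,p_t(x)\sim\sqrt{t}\,e^{-d^2/(4t)}$ while $\sqrt{t}\,p_t(x)\sim(t/d)e^{-d^2/(4t)}$, and the ratio of the two is $d/\sqrt{t}\to\infty$. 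More globally, dividing the claimed estimate by $p_t(x)>0$ would give $\|u\|_{L^\infty}\le C\sqrt{t}\,\|\nabla u\|_{L^\infty}$, which for eigenfunctions holds only when $t\gtrsim\lambda^{-1}$, whereas the lemma is applied in the paper in the limit $t\to 0^+$. So the pointwise statement, as written, is in fact not attainable in the regime where it is used — the paper's own three-line proof glosses over exactly the same point. What is actually used downstream, and what does hold, is the integrated bound
$$\int_D p_t(x)\,u(x)\,dx \;\le\; C\sqrt{t}\,\|\nabla u\|_{L^\infty}\int_D p_t(x)\,dx,$$
which reduces via the mean value inequality to $\int_D p_t(x)\,d(x,\partial D)\,dx \lesssim \sqrt{t}\int_D p_t(x)\,dx$. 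That is a genuine consequence of the Gaussian localization of $p_t$ in a $\sqrt{t}$-tube (the weighted average of $d(\cdot,\partial D)$ against $p_t$ is $O(\sqrt{t})$), and your layer-cake computation, transplanted to the integral, would furnish a clean proof of it. I would rewrite the conclusion of your argument so that the target is the integrated inequality rather than the pointwise one.
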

\begin{proof}
Our starting point is given by the definition of $ e^{t\Xi}$, which gives the pointwise equation
$$ e^{t\Xi}u - e^{t\Delta_D}u = p_t(x) u(x).$$
This pointwise equation is the key to proving the inequality: $u(x)$ will grow with increasing
distance to the boundary but we can use the trivial estimate coming from the mean-value theorem
$$ u(x) \leq d(x,\partial D)\|\nabla u\|_{L^{\infty}}.$$
Thus
$$ \int_{D}{p_t(x) u(x)dx} \leq \|\nabla u\|_{L^{\infty}} \int_{D}{d(x,\partial D) p_t(x)dx}.$$
The function $p_t(x)$ behaves like a smooth cutoff-function around the boundary since
it follows from Feynman-Kac that $p_t(x)$ is the probability of a Brownian motion started
in $x$ hitting the boundary within $t$ units of time. Thus, since a Brownian motion particle
travels on average a distance $\sim t^{1/2}$ (and larger distances have a superexponentially
decaying tail)
$$ p_t(x) \leq c_1 e^{-c_2 d(x,\partial D)^2/t}$$
for two absolute constants $c_1, c_2 > 0$ depending only on $(M,g)$. This last step is completely
equivalent to Varadhan's short-time asymptotic (which will be used more directly below in another
step of the proof) and could have been replaced by that. 
This integral now contains a product of a $1-$Lipschitz function and a superexponentially decaying
function, which starts to decay rapidly around $d(x, \partial D) \sim t^{1/2}$. Therefore, we get that for some $C > 0$ depending only on $(M,g)$
that
$$ \|\nabla u\|_{L^{\infty}} \int_{D}{d(x,\partial D) p_t(x)dx} \leq 
 \|\nabla u\|_{L^{\infty}} \int_{D}{C t^{1/2}p_t(x)dx}.$$
\end{proof}

\textit{Remark.} The manifold is compact: therefore the estimate of $p_t(x)$ being localized within a $t^{1/2}-$neighbourhood of the nodal set
is too rough for large time and only really accurate for $t \lesssim \lambda^{-1/2}$, which is precisely the time-scale on which the
Lemma will be ultimately applied.

\subsection{Conclusion.} Fix again an arbitrary nodal domain $D$ and we assume
again without loss of generality that $u(x)\big|_{D} \geq 0.$
The heat equation, the comparison lemma and the $L^1-$conservation of $\Xi$ give
\begin{align*}
 e^{-\lambda t}\int_{D}{u(x) dx} &= \int_{D}{e^{t\Delta_{D}}u(x) dx} \\
&\geq \int_{D}{e^{t\Xi}u(x) dx} -  C t^{1/2}  \|\nabla u\|_{L^{\infty}}\int_{D}{p_t(x)dx} \\
&= \int_{D}{u(x) dx} -  C t^{1/2}  \|\nabla u\|_{L^{\infty}}\int_{D}{p_t(x)dx}
\end{align*}
Therefore
$$ \int_{D}{p_t(x)dx} \gtrsim \frac{1-e^{-\lambda t}}{t^{1/2}}\frac{\|u\|_{L^1(D)}}{\|\nabla u\|_{L^{\infty}(D)}}.$$
The result now follows once we show that up to constants depending on the manifold
$$ \lim_{t \rightarrow 0^+}{\frac{1}{\sqrt{t}}\int_{D}{p_t(x)dx}} \sim \mathcal{H}^{n-1}(\partial D).$$
This result will follow from showing that for $x \in D$ with the property that $\partial D$
is smooth in a $t^{1/2}-$neighbourhood of $x$, we have
$$c_3 e^{-c_4 d(x,\partial D)^2/t} \leq p_t(x) \leq c_1 e^{-c_2 d(x,\partial D)^2/t},$$
i.e. that the upper bound coming from the heat kernel is of the right order  -- this is an easy consequence
of Varadhan's large deviation formula  \cite{var} . The second part is to show that almost all parts of the boundary (up to a set of small Minkowski dimension) are smooth, which follows from a recent
result of Cheeger, Naber \& Valtorta \cite{ch} (their result is actually stronger than we require).\\

The involved quantities are essentially local and so is our argument: it is known that the critical set
$$ \left\{x \in D: u(x) =  |\nabla u(x)| = 0 \right\}$$
has $(n-1)-$dimensional \textit{Minkowski} measure 0 - recent results by Cheeger, Naber \& Valtorta
even give bounds on its $(n-2)-$dimensional Minkowski measure. 
Fix a small $t > 0$ and cover the $\partial D$ with cubes of side length $t^{-1/2}$. We call a cube \textit{regular}
if it does not contain an element of the singular set and \textit{singular} otherwise. From the result above, it
follows that the number of singular cubes is of order $o(t^{(-(n-1)/2)})$.\\

\textit{Regular cubes.} Let $t > 0$ be fixed and let $Q$ be a regular cube. Once a cube is regular, there is no need
to further refine it as time tends to 0: the nodal set is given as a $C^{\infty}-$hypersurface.
\begin{center}
\begin{figure}[h!]
\begin{tikzpicture}
\draw (0,0) --(3,0);
\draw (3,0) --(3,3);
\draw (0,3) --(3,3);
\draw (0,3) --(0,0);
\node at (2,2) {$D$};
\node at (2,1) {$D^c$};
\draw[thick] (-3,0) to [out=0,in=200] (5,3);
\end{tikzpicture}
\caption{A regular cube with its nodal set.}
\end{figure}
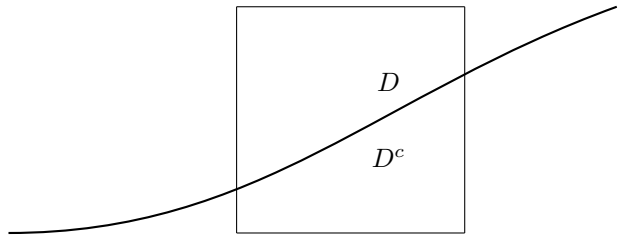
\end{center}

 To see this, we fix a second time-parameter $z > 0$
and study the behavior of
$$ \lim_{z \rightarrow 0^+}{\frac{1}{\sqrt{z}}\int_{D \cap Q}{p_z(x)dx}}$$
as $z \rightarrow 0^+$. As $z \rightarrow 0^+$, the function $p_z(x)$ becomes
concentrated in smaller and smaller neighbourhoods of the surface. Since the
surface is locally $C^{\infty}$, we may treat it as a flat hyperplane of
codimension 1 embedded in $\mathbb{R}^n$ and use the explicit heat kernel 
in $\mathbb{R}^n$ to compute the relevant quantity. The fact that this 
argument is actually stable under small perturbations of the surface
follows immediately from Varadhan's large deviation formula $$ \lim_{z \rightarrow 0^+}{-4z\log K(z,x,y)} = d(x,y)^2.$$
In turn this implies that as $z \rightarrow 0$, we have that $p_z(x)$ is of size $p_z(x) \sim 1$ for $x$ in a $z^{1/2}-$neighbourhood
of $\partial D$ and vanishes superexponentially at larger distances. From the (local) $C^{\infty}-$regularity of the set $\partial D \cap Q$, we get that
$$\lim_{z \rightarrow 0^+}{\frac{1}{\sqrt{z}}\int_{D \cap Q}{p_z(x)dx}} \geq c \mathcal{H}^{n-1}(\partial D \cap Q)$$
for some constant $c > 0$ depending only $(M,g)$. \\

\textit{Singular cubes.}  It remains to show that the error introduced by those cubes containing an element of the
singular set is small: it is not enough to note that their relative proportion is small because they are weighted with
a factor $t^{-1/2}$, which becomes singular for small times. Using $0 \leq p_t(x) \leq 1$ gives
$$ \left| \frac{1}{\sqrt{t}}\int_{D_{\mbox{sing}}}{p_t(x)dx}\right| \leq \frac{1}{\sqrt{t}}|D_{\mbox{sing}}|.$$
$D_{\mbox{sing}}$ consists of $o(t^{(-(n-1)/2)})$ cubes of side-length $t^{1/2}$, therefore
$$ \frac{1}{\sqrt{t}}|D_{\mbox{sing}}| \leq 
\frac{1}{\sqrt{t}}t^{n/2}o(t^{(-(n-1)/2)}) = o(1).$$
Improved estimates on the Minkowski dimension actually imply a faster rate of decay
but these are not necessary for the conclusion of the argument. $\qed$

\section{Thin nodal sets}

In this section, we give a proof of Theorem 2. It is based on the fact that at scale $\sim \lambda^{-1/2}$ 
the neighbourhood of a surface admissible up to $\lambda^{-1/2}$ behaves like the neighbourhood of a 
hyperplane in $\mathbb{R}^n$, which allows for problems to be reduced to well-known one-dimensional facts
(indeed, our notion of 'admissible' is chosen such that this is true). It is perhaps easiest to understand
the proof first for $n=2$, where all key elements are already present: for $n=2$ an admissible surface is
merely a curve with curvature $\kappa \leq \lambda^{-1/2}$. The main idea is that a Brownian motion particle is equally likely to wander
in every direction: in a a 'squeezed nodal domain', it will hit the boundary too often.

\begin{proof}[Proof of Theorem 2.] We consider the heat flow with Dirichlet conditions on the nodal domain $D$
\begin{align*}
 (\partial_t - \Delta_g)v(t,x) &= 0 \quad \mbox{on}~D \setminus \left\{u(x)=0\right\}\\
v(t,x) &= 0 \quad \mbox{on}~\left\{u(x)=0\right\}\\
v(0,x) &= u(x).
\end{align*}
The other case being identical, we assume $u(x)$ to be positive in $D$. We start by proving a
statement showing the existence of some point $x \in D$ such that Brownian motions starting in
$x$ are not very likely to hit the boundary: from physical intuition it is not surprising that
these points should be close to those points, where the eigenfunction assumes its maximum 
and this guides our argument; for some fascinating results in that direction, we refer
to Grieser \& Jerison \cite{jerison}. We will prove that
$$ \forall~t>0 \qquad \inf_{x \in D}{p_t(x)} \leq 1-e^{-\lambda t}$$
by showing the following slightly stronger statement
$$ \forall x \in D\qquad  u(x) = \|u\|_{L^{\infty}(D)} \implies p_t(x) \leq 1-e^{-\lambda t}.$$
Given a $x \in D$ with $u(x) = \|u\|_{L^{\infty}(D)}$, we see using the heat equation and Feynman-Kac
\begin{align*}
 e^{-\lambda t} \|u\|_{L^{\infty}(D)} &= e^{-\lambda t}u(x) 
= \mathbb{E}_x(u(\omega(t))\psi_{D}(\omega(t))) \\
&\leq \|u\|_{L^{\infty}(D)}\mathbb{E}_x(\psi_{D}(\omega(t))) = \|u\|_{L^{\infty}(D)}(1-p_t(x)).
\end{align*}
This proves the claim.\\

We now set the time to be $t= \lambda^{-1}$. It remains to show that choosing $c$ small enough, we can derive
a contradiction to this bound on $p_t(x)$. Take a small $c > 0$ and a $c\lambda^{-1/2}-$neighbourhood of the
admissible surface $\Sigma$. Assume $D$ to be a nodal domain fully contained in that set and let $x \in D$
be such that $u(x) = \|u\|_{L^{\infty}(D)}$. The statement we need to contradict is $p_{\lambda^{-1}}(x) \leq 1-e^{-1}$
and we will do so but suitably bounding the probability of leaving the $c\lambda^{-1/2}-$neighbourhood of
$\Sigma$ from below to achieve a contradiction: if the nodal domain was contained in a small neighbourhood
of $\Sigma$, it will hit the boundary pretty definitely and we expect $p_{\lambda^{-1}}(x)$ to be arbitrarily
close to 1 as $c$ becomes small. The remainder of the proof consists in making this precise.
\begin{center}
\begin{figure}[h!]
\begin{tikzpicture}
\node at (3,1.5) {$D^c$};
\node at (6,-1) {$D^c$};
\node at (-0.3,0) {$\Sigma$};
\node at (4,0.7) {$D$};
\node at (5,0) {$D$};
\draw[ultra thick] (0,0) to [out=0,in=200] (7,1);
\draw[thick, dashed] (0,-1) to [out=0,in=200] (7,0);
\draw[thick, dashed] (0,1) to [out=0,in=200] (7,2);
\draw[thick] (0,0.8) to [out=0,in=200] (3,0.9) to [out=15,in=190] (7,1.5);
\draw[thick] (0,-0.8) to [out=10,in=180] (3,-0.5) to [out=0,in=190] (7,0.1);
\end{tikzpicture}
\caption{An example in two dimensions: the surface $\Sigma$ (thick), its $c\lambda^{-1/2}-$neighbourhood (dashed) and the boundary of the nodal domain $D$.}
\end{figure}
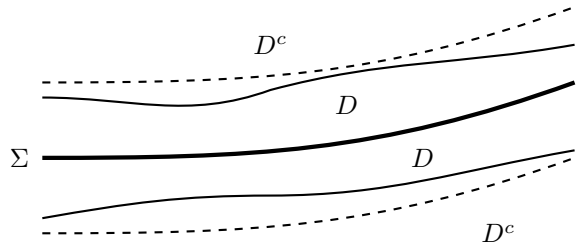
\end{center}
Let now $x \in D$. A Brownian motion starting in $x$ has, at any point inside $D$, at most $n-1$ 'good' directions in which it can wander unhindered
and at least 1 'bad' direction: it is only allowed to wander in direction of the normal of $\Sigma$ for a very short distance before impacting on the boundary. For $0 < c \ll 1$, the 
curvature of the surface plays hardly any role: we can assume the surface to be a flat hyperplane.\\

If $\Sigma = \mathbb{R}^{n-1}$, then $\mbox{dist}(\omega(t), \Sigma)$ behaves like a one-dimensional Brownian motion $B(t)$
and we have
 $$ p_{\lambda^{-1}(x)} \geq \mathbb{P}\left( \sup_{0 < s < \lambda^{-1}}{B(t)} > c\lambda^{-1/2} \right).$$
This quantity, however, is well-understood and the reflection principle (see e.g. \cite{ka}) implies
$$ \mathbb{P}\left( \sup_{0 < s < \lambda^{-1}}{B(t)} > c\lambda^{-1/2} \right)= 2 \mathbb{P}\left( B(\lambda^{-1}) > c\lambda^{-1/2} \right).$$
However, $B(\lambda^{-1})$ is just a random variable following a normal distribution with mean $\mu = 0$ and variance $\sigma = \lambda^{-1}$. By symmetry
$$ 2 \mathbb{P}\left( B(\lambda^{-1}) > c\lambda^{-1/2} \right) =  \mathbb{P}\left( \left|B(\lambda^{-1})\right|> c\lambda^{-1/2} \right)$$
and by bounding the normal distribution by its maximal value
$$ \mathbb{P}\left( \left|B(\lambda^{-1})\right|> c\lambda^{-1/2} \right) \geq 1 
- \int_{-c\lambda^{-1/2}}^{c\lambda^{1/2}}{\frac{1}{\sqrt{2\pi}}\frac{1}{\lambda^{1/2}}dx} = 1 - \sqrt{\frac{2}{\pi}}c.$$
This yields a contradiction for $c < \sqrt{\pi}/(\sqrt{2}e)$ in the case of $\Sigma = \mathbb{R}^{n-1}$. A perturbative version of this
argument applies to more general curved surfaces (with a potentially smaller $c$).
\end{proof}

\textit{Question.} It could be interesting to study isoperimetric principles for these types of problems. Let $\Sigma \subset \mathbb{R}^n$ be a $C^{\infty}-$surface,
$x \in \Sigma$ and $\varepsilon > 0$. Is the probability of a Brownian motion leaving a $\varepsilon-$neighbourhood of $\Sigma$ minimized in the flat case
$\Sigma = \mathbb{R}^{n-1}$ and the Brownian motion starting in a point $x \in \Sigma$? Do minimal surfaces play a distinguished role?

\section{Avoided crossings}
Let $(M,g)$ be as above and, additionally, two-dimensional. Fix some $\alpha > 1/2$. We define an \textit{avoided crossing} as follows: let $T$ be a geodesic connecting two points 
$a,b \in M$ and $D$ be a nodal domain containing $T$. We say that $D$ avoids a crossing if there exists a $\lambda^{-\alpha}$-neighbourhood of $T$ such that if a point in the nodal domain $x \in D$ 
is at distance $\lambda^{-\alpha}$ from the geodesic $d(x, T) = \lambda^{-\alpha}$, then it must be close to one of the of the endpoints $a$ or $b$
$$\min(d(a,x), d(b,x)) \leq \lambda^{-\alpha}.$$
\begin{center}
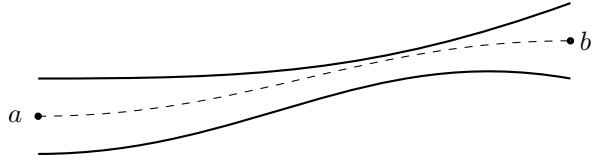
\begin{figure}[h!]
\begin{tikzpicture}
\draw [ultra thick] (0,-0.5) circle [radius=0.02];
\node at (-0.3,-0.5) {$a$};
\draw [ultra thick] (7,0.5) circle [radius=0.02];
\node at (7.2,0.5) {$b$};
\draw[dashed] (0,-0.5) to [out=0,in=180] (7,0.5);
\draw[thick] (0,0) to [out=0,in=200] (7,1);
\draw[thick] (0,-1) to [out=0,in=170] (7,0);
\end{tikzpicture}
\caption{Two nodal lines almost crossing: a nodal domain contained in a small neighbourhood of a geodesic (dashed).}
\end{figure}
\end{center}

Alternatively, between $a$ and $b$ every point $x$ of the nodal domain is at distance at most $\lambda^{-\alpha}$ from the geodesic line segment $T$. If $\alpha$ is big, then for
this to be possible, $a$ and $b$ need to be very close together.
\begin{proposition} If $D$ avoids a crossing, then 
$$ d(a,b) \leq C\lambda^{1/2-\alpha}\log{\lambda}$$
for some constant $C<\infty$ depending only on $(M,g)$. 
\end{proposition}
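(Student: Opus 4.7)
The plan is to adapt the Brownian motion comparison used in Theorem 2 to the thin-tube geometry of an avoided crossing, balancing the transverse escape probability of a Brownian path against its longitudinal displacement.

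Let $x_0 \in D$ be a point where $u(x_0) = \|u\|_{L^{\infty}(D)}$. The Feynman--Kac computation that opened the proof of Theorem 2 yields the universal upper bound
$$ p_t(x_0) \;\leq\; 1 - e^{-\lambda t}, \qquad t > 0, $$
and this is the only estimate we import from Theorem 2. The strategy is to choose $t$ so that the transverse thinness of the $\lambda^{-\alpha}$-tube forces a matching lower bound of the form $p_t(x_0) \geq 1 - o(1)$, contradicting the above whenever $d(a,b)$ exceeds the claimed threshold.

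The natural time scale for a Brownian particle to feel a transverse wall at distance $\lambda^{-\alpha}$ is $t\sim \lambda^{-2\alpha}$, but one pays an extra factor $\log\lambda$ to push the transverse escape probability from constant order to being exponentially close to one. I would therefore pick $t_{\ast} = c\,\lambda^{-2\alpha}\log\lambda$ with $c$ a constant to be tuned. At this scale the longitudinal Brownian displacement is $O(\sqrt{t_{\ast}}) = O(\lambda^{-\alpha}\sqrt{\log\lambda})$, which is much smaller than $d(a,b)$ whenever the conclusion of the proposition fails, so the particle cannot reach the endpoints $a$ or $b$ before exiting $D$; simultaneously, the transverse component of the Brownian motion lives inside a strip of half-width $\lambda^{-\alpha}$ whose first Dirichlet eigenvalue is $\pi^{2}\lambda^{2\alpha}/4$, and by the reflection principle (exactly as in the proof of Theorem 2) the probability of remaining in that strip up to time $t_{\ast}$ is at most $C\lambda^{-c\pi^{2}/4}$.

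Combining these two facts gives the geometric lower bound $p_{t_{\ast}}(x_0) \geq 1 - C\lambda^{-c\pi^{2}/4}$, which, for $c$ large enough, contradicts the universal upper bound $1 - e^{-\lambda t_{\ast}} \simeq c\,\lambda^{1-2\alpha}\log\lambda$ (tending to $0$ since $\alpha > 1/2$). The contradiction forces $d(a,b) \leq C\lambda^{1/2-\alpha}\log\lambda$.

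The step I expect to be the main obstacle is handling the case where $x_0$ sits not in the thin middle portion of the tube but in one of the endpoint bulges that the avoided crossing condition permits: there $d(x_0,\partial D)$ can be of order $\lambda^{-1/2}$, larger than $\lambda^{-\alpha}$, so the transverse-escape estimate is not immediately available. To treat this case I would use the Hayman-type inradius bound for nodal domains (already discussed in Section 3) to control the bulge size by $O(\lambda^{-1/2})$, and then apply the strong Markov property to reduce to a Brownian motion that enters the $\lambda^{-\alpha}$-tube after a bounded multiple of its bulge-exit time $\lambda^{-1}$; once inside the tube, the above transverse-escape analysis takes over. The logarithmic correction in the conclusion is precisely the cost of upgrading the transverse escape from constant-order to overwhelming, which accounts for the $\sqrt{\log\lambda}$ loss relative to Mangoubi's sharper bound.
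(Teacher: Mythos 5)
Your approach is genuinely different from the paper's and it has a gap that your own final paragraph flags but does not close.

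The paper does not start from the maximum of $u$ on $D$ at all. It covers the tube $D \cap \{d(\cdot,T)\leq\lambda^{-\alpha}\}$ with $N$ squares $R_1,\dots,R_N$ of side $\lambda^{-\alpha}$, uses Feynman--Kac at time $t=\lambda^{-2\alpha}$ to derive the inequality
\begin{equation*}
e^{-\lambda^{1-2\alpha}}\sup_{R_i}|u|\;\leq\;p_{ie}\|u\|_{L^\infty(M)}+\sum_j p_{ij}\sup_{R_j}|u|,
\end{equation*}
and then iterates this outward from the middle square $R_{\lfloor N/2\rfloor}$. Because a definite fraction of mass is lost to the transverse boundary at each step and the middle-to-boundary exit term is negligible, the supremum must grow by a fixed multiplicative factor with each iteration, producing $(1+c_2)^{N/6}\sup_{R_{\lfloor N/2\rfloor}}|u|\lesssim\|u\|_{L^\infty(M)}\lesssim\lambda^{(n-1)/4}$. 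This is then played against the Donnelly--Fefferman vanishing-order lower bound $\sup_{R_i}|u|\gtrsim\lambda^{-c\alpha\sqrt\lambda}$ to force $N\lesssim\sqrt\lambda\log\lambda$, hence $d(a,b)\sim\lambda^{-\alpha}N\lesssim\lambda^{1/2-\alpha}\log\lambda$. The argument never invokes the location of the global maximum and therefore never needs to worry about the endpoint bulges.

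The gap in your argument is precisely the bulge case, and the proposed patch does not close it. If $x_0$ with $u(x_0)=\|u\|_{L^\infty(D)}$ lies in the thin middle of the tube, your comparison of $p_{t_\ast}(x_0)\leq 1-e^{-\lambda t_\ast}\to 0$ against the transverse escape bound $p_{t_\ast}(x_0)\to 1$ gives a contradiction \emph{independently of $d(a,b)$}; the only information it carries is that $x_0$ is not deep in the tube. All the weight therefore falls on the bulge case. There, to move the particle out of a bulge of inradius $\sim\lambda^{-1/2}$ you must run the flow for time $t\sim\lambda^{-1}$, at which point $1-e^{-\lambda t}$ is an order-one constant strictly below $1$, not a quantity tending to $0$, so the Feynman--Kac upper bound is no longer a strong constraint. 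Worse, the survival probability in the bulge alone is $\gtrsim e^{-\lambda_1(\mathrm{bulge})\,t}$, and since the bulge is a subset of $D$ one only knows $\lambda_1(\mathrm{bulge})\geq\lambda$; if the bulge occupies most of $D$ (long thin tube attached to a near-spherical cap, which the avoided-crossing definition allows), $\lambda_1(\mathrm{bulge})$ can be arbitrarily close to $\lambda$, so the survival probability can be as large as $e^{-\lambda t}(1+o(1))$, exactly saturating the Feynman--Kac bound with no room for a contradiction. Your strong-Markov decomposition also loses a constant factor at the bulge-to-tube transfer, which makes the constants move in the wrong direction. In short, the heart of the proposition — why the tube cannot be longer than $\lambda^{1/2-\alpha}\log\lambda$ — is not reached by the maximum-point argument; one needs the square-by-square growth iteration together with the Donnelly--Fefferman vanishing-order input, neither of which appears in your proposal.
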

We have a nontrivial statement precisely if $\alpha > 1/2$ -- as we have seen above, a nodal line may well be contained in the $\lambda^{-1/2}$ neighbourhood of a geodesic.
The result could be optimal up to the logarithmic factor.

\begin{proof} We consider the set
$$ D \cap  \left\{y \in M: d(y,T) \leq \lambda^{-\alpha}\right\}$$
and cover it with $N$ squares of scale $\lambda^{-\alpha} \times \lambda^{-\alpha}$, which we call $R_1, R_2, \dots, R_N$ and where the enumeration is such that
$R_{i}$ borders on $R_{i-1}$ and $R_{i+1}$. Our goal is to prove the upper bound $N \lesssim \sqrt{\lambda}\log{\lambda}$ on the number of squares. This will then imply the result since
$d(a,b) \sim \lambda^{-\alpha}N$.
\begin{center}
\begin{figure}[h!]
\begin{tikzpicture}
\draw (0,1) --(0,-1);
\draw (0,-1) --(2,-1);
\draw (2,-1) --(2,1);
\draw (0,1) --(2,1);

\draw (2,1) --(2,-1);
\draw (2,-1) --(4,-1);
\draw (4,-1) --(4,1);
\draw (2,1) --(4,1);

\draw (4,1) --(4,-1);
\draw (4,-1) --(6,-1);
\draw (6,-1) --(6,1);
\draw (4,1) --(6,1);

\draw (6,1) --(6,-1);
\draw (6,-1) --(8,-1);
\draw (8,-1) --(8,1);
\draw (6,1) --(8,1);

\draw (8,1) --(8,-1);
\draw (8,-1) --(10,-1);
\draw (10,-1) --(10,1);
\draw (8,1) --(10,1);

\draw (10,1) --(10,-1);
\draw (10,-1) --(12,-1);
\draw (12,-1) --(12,1);
\draw (10,1) --(12,1);

\draw[thick] (0,0.8) to [out=0,in=200] (3,0.4) to [out=15,in=190] (12,0.8);
\draw[thick] (0,-0.8) to [out=10,in=180] (3,-0.1) to [out=0,in=190] (12,0.1);
\end{tikzpicture}
\caption{An almost crossing and a covering with squares.}
\end{figure}
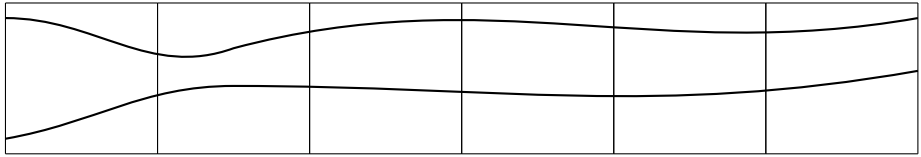
\end{center}
Let us quickly illustrate the main idea: we consider the evolution of the heat equation 
with Dirichlet boundary with the eigenfunction as
initial data for very short time $t = \lambda^{-2\alpha}$. The explicit solution implies
that this time is too short for any real change to happen, the function is almost static
on that time scale: we write
\begin{align*}
 (\partial_t - \Delta_g)v(t,x) &= 0 \quad \mbox{on}~D \setminus \left\{u(x)=0\right\}\\
v(t,x) &= 0 \quad \mbox{on}~\left\{u(x)=0\right\}\\
v(0,x) &= u(x).
\end{align*} 
Since $\alpha > 1/2$ and $t = \lambda^{-2\alpha}$, we have
$$ v\left(\lambda^{-2\alpha} ,x\right) = e^{-\lambda^{1-2\alpha}}u(x) \sim u(x).$$
Let us now consider a square in the covering and a Brownian motion particle: it moves
a distance of $\sim \lambda^{-\alpha}$: it will thus likely either enter another square
or impact on the boundary of the nodal domain (and both events will happen with a
probability uniformly bounded away from 0). However, the effect of particles impacting
on the boundary implies a loss of the $L^1-$norm, which we know is not there -- therefore
this loss is being counterbalanced by the surviving particles carrying back larger mass. \\

A Brownian motion particle started in $R_i$ for time $t = e^{-\lambda^{\alpha}}$ can either
impact on the boundary with probability $p_{b} > 0$ (bounded away uniformly from 0), can end
up in any of the other squares with probability $p_{ij}$ or exit the entire covered domain
entirely with probability $p_{ie}$. Note that
$$ p_{b} + \sum_{j=1}^{N}{p_{ij}} + p_{ie} = 1.$$
Using the Feynman-Kac formula, this implies
$$ e^{-\lambda^{1-2\alpha}}\sup_{x \in R_{i}}{|u(x)|} \leq  p_{ie}\|u\|_{L^{\infty}(M)} + \sum_{j=1}^{N}{p_{ij}\sup_{x \in R_{j}}{|u(x)|}} \qquad \qquad \qquad (\diamond)$$
Note that the decay of the heat kernel implies
\begin{align*}
 p_{ij} &\lesssim \exp{\left(-|i-j|^2\right)} \\
 p_{ie} &\lesssim \exp{\left(\min\left(i^2, (N-i)^2\right)\right)}.
\end{align*}
Let us now prove the statement by contradiction: we assume from now on that $N \gtrsim \sqrt{\lambda}\log{\lambda}$.\\

Pick some $N/3 \leq i \leq 2N/3$. Then the contribution gained from exiting the entire domain is negligible since
 $$p_{ie}\|u\|_{L^{\infty}(M)} \lesssim \lambda^{\frac{n-1}{4}}e^{-N^2/100} \leq \lambda^{\frac{n-1}{4}}\left(\frac{c}{\lambda}\right)^{\lambda \log{\lambda}}.$$
Let us now imply the inequality $(\diamond)$ for $i = \left\lfloor N/2 \right\rfloor$. It now implies that there is
a rectangle $j$ such that
$$ \exp{\left(c_1|\left\lfloor N/2 \right\rfloor-j|^2\right)}\sup_{x \in R_{\left\lfloor N/2 \right\rfloor}}{|u(x)|} \leq \sup_{x \in R_{j}}{|u(x)|}$$
for some universal constant $c_1$ depending only on $(M,g)$. We want to iterate this inequality
several times to show that $u$ has to be much bigger than $\sup_{x \in R_{\left\lfloor N/2 \right\rfloor}}{|u(x)|}$
at some other place. If $j \leq N/3$ or $j \geq 2N/3$,
we quit, otherwise we reiterate the procedure until the index leaves the range $\left\{N/3,
N/3 + 1, \dots, 2N/3\right\}$. The worst case is that for each $i$ the inequality holds true
with $j = i+1$ in which case we still have
$$ (1+c_2)^{N/6}\sup_{x \in R_{N/2}}{|u(x)|} \leq \|u\|_{L^{\infty}(M)} \lesssim \lambda^{\frac{n-1}{4}}.$$
At the same time we have the vanishing order estimate due to Donnelly \& Fefferman and thus
$$ \sup_{x \in R_{\left\lfloor \frac{N}{2} \right\rfloor}}{|u(x)|}  \gtrsim \inf_{i}\sup_{x \in R_{i}}{|u(x)|} \gtrsim \left(\frac{1}{\lambda^{c_2 \alpha}}\right)^{\sqrt{\lambda}}$$
for some $c_2 > 0$ depending only on the manifold, which combined implies
$$ N \lesssim \sqrt{\lambda}\log{\lambda}.$$
\end{proof}

\textit{Remark.} This example gives a heat-flow approach to the phenomenon that elliptic equations in narrow domains
exhibit rapid growth -- a classical elliptic version of this principle also appears in the work of Mangoubi \cite{m2}.

\section{Opening angles}
The purpose of this section is to give a new, short and transparent proof of the following result.
\begin{thm}[Bers, Cheng]
 Let $n = 2$. If $-\Delta u = \lambda u$, then any nodal set satisfies an interior cone condition with opening angle $\alpha \gtrsim \lambda^{-1/2}$.
\end{thm}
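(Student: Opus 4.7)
My plan is to combine the Feynman--Kac survival lower bound for the Dirichlet heat flow on $D$ (as exploited in Sections 3 and 4) with the classical hitting-probability estimate for planar Brownian motion in an infinite wedge, and to close the argument via a Harnack-type propagation of $u$ at the wavelength scale. Assume WLOG $u > 0$ on $D$. Suppose, for contradiction, that the interior cone condition fails at some $p\in\partial D$ with opening angle $\alpha < c\lambda^{-1/2}$ for a small constant $c$ to be chosen. In $n=2$ the nodal set near $p$ is a finite union of $C^\infty$ arcs meeting at $p$, so $D$ near $p$ is a union of open sectors; pick one sector $S\subset D$ of opening $\beta\le\alpha$ with $u>0$ on $S$.

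The two main estimates are as follows. First, the identity $e^{-\lambda t}u(x)=\mathbb{E}_x[u(B_t)\psi_D(B,t)]$ yields for every $x\in D$ and $t>0$ the survival lower bound
$$\mathbb{P}_x(\tau_D>t)\;\geq\;e^{-\lambda t}\,\frac{u(x)}{\|u\|_{L^\infty(D)}}.$$
Second, a Brownian particle started at $x\in S$ with $r=|x-p|$ cannot survive in $D$ past its first crossing of either of the two nodal arcs bounding $S$, since the opposite side of each arc belongs to an adjacent nodal domain where $u$ has the opposite sign. Approximating these smooth arcs by their tangent rays at $p$, which is legitimate on wavelength scales, the classical planar hitting-probability estimate for an infinite wedge (provable by a conformal map to a half-plane) gives
$$\mathbb{P}_x(\tau_D>t)\;\lesssim\;\left(\frac{r}{\sqrt t}\right)^{\pi/\beta},\qquad r\lesssim \sqrt t.$$
Evaluating both bounds at $t=1/\lambda$ produces the pointwise inequality
$$\frac{u(x)}{\|u\|_{L^\infty(D)}}\;\lesssim\;(r\sqrt\lambda)^{\pi/\beta},\qquad x\in S,\ r\lesssim \lambda^{-1/2}.$$

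To close the contradiction I need a point $x^*\in S$ at distance $r^*\sim\lambda^{-1/2}$ from $p$ with a matching lower bound on $u(x^*)/\|u\|_{L^\infty(D)}$. The main obstacle is exactly this lower bound, because the global max of $u$ need not lie near $p$; indeed for the extremal example $u=\operatorname{Re}(x+iy)^{\sqrt\lambda}$ the function $u$ is genuinely tiny near $p$, of order $\lambda^{-\sqrt\lambda/2}$. I would resolve it as follows: the inradius bound $\rho(D)\gtrsim\lambda^{-1/2}$ (itself a direct consequence of the same Feynman--Kac identity applied at the global max, essentially the argument already contained in Section 3) produces a wavelength-scale ball in $D$ where $u\sim\|u\|_{L^\infty(D)}$, and Harnack's inequality for the eigenvalue equation at the natural scale $\lambda^{-1/2}$ transports this lower bound along a chain of overlapping wavelength balls in $D$. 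Each ball costs only a constant factor, and the chain has length $O(\mathrm{diam}(D)\sqrt\lambda)=O(\sqrt\lambda)$, so the cumulative loss is at most $C_0^{\sqrt\lambda}$ for some $C_0$ depending only on $(M,g)$. Taking $r^*=1/(2\sqrt\lambda)$ and combining with the wedge--FK upper bound yields $C_0^{-\sqrt\lambda}\lesssim (1/2)^{\pi/\beta}$, i.e.\ $\pi/\beta\lesssim \sqrt\lambda$, hence $\beta\gtrsim \lambda^{-1/2}$, as claimed. The same framework adapts to $n\geq 3$ and to other local boundary shapes by replacing the exponent $\pi/\beta$ in the wedge estimate with the quantity determined by the first Dirichlet eigenvalue of the spherical cap cut out by the cone.
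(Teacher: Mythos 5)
Your proposal has the same skeleton as the paper's argument: a Feynman--Kac survival lower bound on the nodal domain, a planar wedge escape-probability upper bound of order $r^{-\pi/\beta}$, and a lower bound on $u$ inside the wedge of quality $\lambda^{-O(\sqrt\lambda)}\|u\|_{L^\infty}$, combined to force $\pi/\beta\lesssim\sqrt\lambda$. The difference is in how you obtain the third ingredient. The paper simply imports the Donnelly--Fefferman vanishing-order bound $|u(x)|\gtrsim c_1|x-x_0|^{c_2\sqrt\lambda}$ for $x$ in the cone and lets $x\to x_0$; you instead fix $x^*$ at wavelength distance from the vertex and propagate a lower bound from the inradius ball around the maximum via a Harnack chain. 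In principle this is more self-contained (it avoids invoking doubling/vanishing-order as a black box), which is a genuine gain.

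However, the chain-length count is where your argument has a gap. You assert length $O(\mathrm{diam}(D)\sqrt\lambda)=O(\sqrt\lambda)$ with wavelength-scale balls throughout, but $x^*$ sits at distance $\tfrac12\lambda^{-1/2}$ from the vertex of a sector of opening $\beta$, so $d(x^*,\partial D)\sim\beta\lambda^{-1/2}$, and the last stretch of the chain must use geometrically shrinking balls, adding $O(\log(1/\beta))$ terms. This particular correction is harmless: since $\beta\log(1/\beta)\to 0$, the resulting inequality $\max(\sqrt\lambda,\log(1/\beta))\gtrsim 1/\beta$ still forces $\beta\gtrsim\lambda^{-1/2}$, so you should state the corrected bound. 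The more serious omission is that nothing you invoke guarantees the \emph{bulk} of the chain, from the inradius ball to the mouth of the wedge, can be run through wavelength-scale balls inside $D$. A nodal domain can a priori have narrow passages far from $p$, and without an additional geometric input (e.g. Mangoubi's local inner-radius estimates, or going back to the Donnelly--Fefferman doubling inequality, which is what the paper effectively uses) the loss along the chain is uncontrolled. You should either cite such a result or revert to the vanishing-order estimate at that step; otherwise the proof does not close. As a smaller remark, the pointwise lower bound for $u$ inside the cone is sensitive to the direction of $x-x_0$ (in the model $\mathrm{Re}(z^k)$ the function degenerates toward the two bounding rays), so like the paper you should fix $x^*$ on the bisector, as you implicitly do.
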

The underlying idea is very simple: suppose the opening angle of the cone was very small. Rescaling allows us to study points very close to the apex
and in particular the survival probability of Brownian motions starting from there.
\newcommand{\Emmett}[5]{
\draw[#4] (0,0)
\foreach \x in {1,...,#1}
{   -- ++(#2,rand*#3)
}
node[right] {#5};
}

\begin{center}
 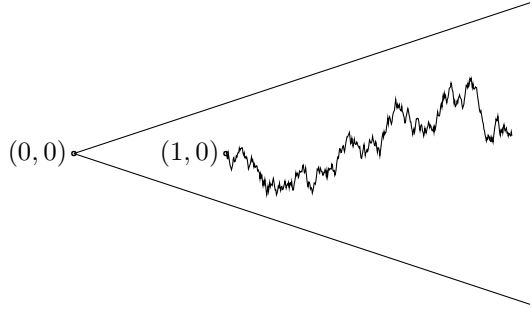
\begin{figure}[h!]
  \begin{tikzpicture}[scale = 0.4]
\draw [ultra thick] (0, 0) circle [radius=0.02];
\node at (-1.2,0) {$(1,0)$};
\draw [ultra thick] (-5, 0) circle [radius=0.02];
\node at (-6.2,0) {$(0,0)$};
\draw (-5,0) --(10,5);
\draw (-5,0) --(10,-5);
\Emmett{470}{0.02}{0.2}{black}{}
\end{tikzpicture}
\caption{A Brownian motion started inside a cone.}
 \end{figure}
\end{center}
Since the opening angle is small, the survival probability is small as well,
which in turn implies fast decay of the eigenfunction close to the apex -- ultimately contradicting the Donnelly-Fefferman estimate on the vanishing order.
\begin{proof} We use the following result, which we found in the book of M\"orters \& Peres \cite[Theorem 7.24]{peres}. Let $W(\alpha)$ denote the cone with opening angle $\alpha$ with vertex in
the origin and symmetric around the $x-$axis and let $B(t)$ be a Brownian motion started in $(1,0)$. Define a stopping time 
$$ T(r) = \inf \left\{t \geq 0: |B(t)| = r\right\}.$$
Then, for $r > 1$,
$$ \mathbb{P}\left(B[0,T(r)] \subset W(\alpha)\right) = \frac{2}{\pi}\arctan\left(\frac{2 r^{\frac{\pi}{\alpha}}}{r^{\frac{2\pi}{\alpha}}-1}\right).$$
We will be using the result in the regime $r \gg 1$, in which case
$$ \mathbb{P}\left(B[0,T(r)] \subset W(\alpha)\right) \sim r^{-\frac{\pi}{\alpha}}.$$
Suppose we are given a point $x_0 \in M$ at the boundary of a nodal domain. The order of vanishing is at most $\sqrt{\lambda}$, meaning
$$ |u(x) - u(x_0)| \geq c_1|x-x_0|^{c_2\sqrt{\lambda}}$$
for some constants $c_1, c_2 > 0$ and $x$ contained in the cone. The cone itself, however, is scaling invariant. Let us again consider
the heat equation
\begin{align*}
 (\partial_t - \Delta_g)v(t,x) &= 0 \quad \mbox{on}~D \setminus \left\{u(x)=0\right\}\\
v(t,x) &= 0 \quad \mbox{on}~\left\{u(x)=0\right\}\\
v(0,x) &= u(x)
\end{align*} 
with the explicit solution $e^{-\lambda t}u(x)$ and let us assume w.l.o.g. that $u \geq 0$ on that nodal domain. Pick now a $x$ inside the cone close to $x_0$. 
We consider the solution of the heat equation for time $t = |x-x_0|$. A typical Brownian motion travels a distance of $t^{1/2} = |x-x_0|^{1/2},$
which is why we set (after rescaling) $r = |x-x_0|^{-1/2}$ in the above theorem. The survival probability of not impacting on the boundary during that time is then given by 
$ \sim |x-x_0|^{\frac{1}{2}\frac{\pi}{\alpha}}.$
The Feynman-Kac formula immediately implies that then
$$ c_1|x-x_0|^{c_2\sqrt{\lambda}}e^{-\lambda|x-x_0|} \leq v(1/\lambda,x) \leq  2|x-x_0|^{\frac{1}{2}\frac{\pi}{\alpha}}\|u\|_{L^{\infty}(M)} =  2|x-x_0|^{\frac{1}{2}\frac{\pi}{\alpha}} \lambda^{\frac{n-1}{4}}.$$
Letting $x \rightarrow x_0$ proves the statement.
\end{proof}

\textit{Remarks.} This proof seems to extend to variety of shapes. Escape probabilities from various regions have been widely studied for Brownian motion (e.g. Ba\~{n}uelos \& Smits \cite{ban} for cones
in higher dimensions) -- such results immediately extend to restrictions on the shape of nodal domains via the above argument.

\section{Comments and conjectures}

 We believe the heat content to be a possibly valuable tool in the further study of Laplacian eigenfunctions; this section studies some further
implications, in particular we conjecture an isoperimetric statement, which would imply another proof of our estimate on the size of nodal domains.

\subsection{Heat content isoperimetry.} The heat content is a very stable notion and well-defined even for very rough domains. We consider
the following statement to be highly plausible.

\begin{conjecture} Let $(M,g)$ be a compact $C^{\infty}-$manifold without boundary. There exists a constant $c > 0$ depending only on $(M,g)$ such that for any open subset $N \subset M$ and all times $t > 0$
$$ \int_{N}{p_t(x)dx} \leq c\mathcal{H}^{n-1}(\partial N)\sqrt{t},$$
where the Hausdorff measure is understood to be $\infty$ if undefined.
\end{conjecture}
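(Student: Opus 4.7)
The plan is to exploit the fact that, for any open $N\subset M$, the function $p_t$ is essentially supported in a $\sqrt{t}$-neighbourhood of $\partial N$ and decays super-exponentially past that scale. Writing $d(x) := d_g(x,\partial N)$ and using Feynman--Kac together with standard Gaussian bounds on the heat kernel of $(M,g)$ (see Grigoryan \cite{gri}), one has
\begin{equation*}
p_t(x) \;=\; \mathbb{P}_x\bigl(\omega([0,t]) \not\subset N\bigr) \;\leq\; C_1 \exp\!\left(-\frac{d(x)^2}{C_2\, t}\right),
\end{equation*}
with constants depending only on $(M,g)$, where we combine this with the trivial bound $p_t(x) \leq 1$ for $x$ in the thin tube.

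First, I would dispose of the large-time regime $t \gtrsim \mathrm{diam}(M)^2$ by the trivial estimate $\int_N p_t\,dx \leq |N| \leq |M|$, which is dominated by $\sqrt{t}\,\mathcal{H}^{n-1}(\partial N)$ (noting that if $\mathcal{H}^{n-1}(\partial N)=0$ one may approximate $N$ by smooth sets or observe that such an $N$ is essentially $M$ modulo a null set, where the claim is vacuous). So fix a small $t>0$. Split $N = N_{\text{near}} \cup N_{\text{far}}$ where $N_{\text{near}} = \{d \leq A\sqrt{t}\}$ for a constant $A$ to be chosen. On $N_{\text{far}}$ use the Gaussian bound and foliate by level sets of $d$ via the co-area formula to obtain
\begin{equation*}
\int_{N_{\text{far}}} p_t(x)\,dx \;\leq\; C_1\int_{A\sqrt{t}}^{\infty} e^{-s^2/(C_2 t)}\, \mathcal{H}^{n-1}(\{d = s\})\,ds.
\end{equation*}
On $N_{\text{near}}$ use $p_t\leq 1$, so the contribution is at most $|N_{\text{near}}|$.

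Both contributions are controlled once we establish the Minkowski-type estimate
\begin{equation*}
\mathcal{H}^{n-1}(\{x\in N : d(x)=s\}) \;\leq\; C_3\,\mathcal{H}^{n-1}(\partial N) \qquad \text{for all } 0<s\leq r_0,
\end{equation*}
where $r_0$ depends on the injectivity radius and sectional curvature bounds of $(M,g)$. Granting this, the $N_{\text{near}}$ term becomes $|N_{\text{near}}| = \int_0^{A\sqrt{t}} \mathcal{H}^{n-1}(\{d=s\})\,ds \leq C_3 A\sqrt{t}\,\mathcal{H}^{n-1}(\partial N)$, and the $N_{\text{far}}$ term yields $C_3\sqrt{t}\,\mathcal{H}^{n-1}(\partial N)\int_A^\infty e^{-\sigma^2/C_2}\,d\sigma$, which sums to the required bound.

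The hard part is the Minkowski-type estimate. For $C^1$ or smooth $\partial N$ it follows from the unit-normal exponential map together with Jacobian bounds (Rauch comparison) on scales below the focal radius, but for rough $N$ it can fail: parallel sets of wild boundaries can carry much larger $\mathcal{H}^{n-1}$-measure than the original. The natural hypothesis that salvages the statement is that $\partial N$ is countably $(n-1)$-rectifiable, in which case a Federer-type result identifies the Minkowski content with $2\mathcal{H}^{n-1}(\partial N)$ and gives the required uniform comparability of nearby level sets (see Federer's classical work on curvature measures). One approach is therefore to first prove the conjecture under a rectifiability assumption, and then remove it by a monotone approximation argument using that $\int_N p_t\,dx$ is lower-semicontinuous under inner approximation by smooth subdomains while $\mathcal{H}^{n-1}(\partial N)$ is the supremum of $\mathcal{H}^{n-1}(\partial N')$ over such approximants precisely when $\partial N$ is rectifiable. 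Overcoming the irregular case in full generality — where one genuinely wants $\mathcal{H}^{n-1}(\partial N)$ rather than Minkowski content on the right-hand side — is the main conceptual obstacle and likely requires a delicate argument, perhaps using the intrinsic semigroup representation to replace the pointwise Gaussian argument with a capacitary or Dirichlet-form estimate.
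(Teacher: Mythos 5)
This statement is labeled a \emph{Conjecture} in the paper and the author offers no proof of it, only heuristic remarks (that extremizers should have smooth boundary, that for nodal domains comparability should hold up to $t\sim\lambda^{-1}$, and so on). There is therefore no argument of the paper's to compare against; your proposal has to be read as an attempt at an open problem, and you yourself are candid that it does not close.

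Your decomposition is the correct and standard one for smooth or rectifiable $\partial N$: the Gaussian bound $p_t(x)\leq C_1 e^{-d(x)^2/(C_2 t)}$, the co-area formula in the distance function $d$, and a tube (Rauch/Jacobian) estimate $\mathcal{H}^{n-1}(\{d=s\})\lesssim \mathcal{H}^{n-1}(\partial N)$ valid below the focal radius. You correctly identify this last Minkowski-type inequality as the crux and correctly observe that it can fail for rough $N$. But you then leave the rough case as "the main conceptual obstacle," so the proposal establishes only a restricted version (smooth or countably rectifiable $\partial N$, plus the reduction ideas) and not the conjecture as stated.

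The obstacle is in fact unbridgeable, because the conjecture as literally written is false. Take $M=\mathbb{T}^n$ with $n\geq 3$ and let $K\subset M$ be a compact Cantor-type set with Hausdorff dimension strictly between $n-2$ and $n-1$; then $\mathcal{H}^{n-1}(K)=0$ but $K$ has positive Newtonian capacity, hence is non-polar for Brownian motion. Set $N=M\setminus K$, which is open, and since $K$ is nowhere dense one has $\partial N=K$. Then $\mathcal{H}^{n-1}(\partial N)=0$, yet $p_t(x)=\mathbb{P}_x(\tau_N\leq t)>0$ on a set of positive measure (non-polarity), so $\int_N p_t\,dx>0$ while the right-hand side vanishes for every $c$. (An analogous example in $n=2$ uses a dimension-zero compact set of positive logarithmic capacity.) This is precisely the phenomenon your lemma cannot see: the parallel set $\{d=s\}$ of such a $K$ carries positive $\mathcal{H}^{n-1}$ even though $K$ does not. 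It also breaks the remark you make for the case $\mathcal{H}^{n-1}(\partial N)=0$, where you argue that "such an $N$ is essentially $M$ modulo a null set, where the claim is vacuous" — that reasoning only tracks Lebesgue measure, whereas the heat content responds to capacity, which the null set $K$ can carry. So the conjecture needs an additional hypothesis — e.g. replace $\mathcal{H}^{n-1}(\partial N)$ by the BV perimeter $P(N)$ or by Minkowski content, or assume $\partial N$ countably $(n-1)$-rectifiable with some uniformity — before any version of your near/far decomposition can succeed. For the intended application to nodal domains (whose boundary is a smooth hypersurface away from a set of zero $(n-1)$-dimensional Minkowski content), this is automatic and your argument is essentially complete; it would be more accurate to record the conjecture in that restricted form.
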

\textbf{Remarks.}
\begin{enumerate}
 \item Extremizers of the inequality need to have a smooth boundary: small irregularities in the boundary increase the surface measure but have very limited impact on the left-hand side. 
It would be interesting to understand the relation between the nature of extremizers and geometric properties of the manifold. Is there a connection to Cheeger sets?\\
 \item If the domain $N$ has the property that there is a real number $r > 0$ such that each point $x \in N$ is contained in a ball of radius $r$ (possibly
centered around another point), then the two quantities should be comparable up to $t \sim r^2$. If $N$ is a nodal domain of the Laplacian, the Faber-Krahn inequality implies that the inradius
is at most $\sim \lambda^{-1/2}$ and therefore $t \sim \lambda^{-1}$ is the maximum time up to which we expect the quantities to be
comparable.
\end{enumerate}
In particular, we conjecture that for a nodal domain both quantities are indeed comparable up to $t= \lambda^{-1}$. If this could be shown, it would immediately imply
$$ \mathcal{H}^{n-1}\left(x \in M: u(x) = 0\right) \lesssim \lambda^{\frac{1}{2}}.$$

\subsection{Isoperimetry implies the main statement.}  Assuming the conjecture to be true, the second remark suggests that the maximum viable time for its application to a nodal
domain without loss is given by $t = \lambda^{-1}$. We ignore possible issues arising in its construction (see, e.g. Bass \& Hsu \cite{bh}) and assume the existence of the reflected Brownian motion on the nodal domain. 

\begin{theorem} Assuming heat content isoperimetry and existence of reflected Brownian motion, we have
$$ \mathcal{H}^{n-1}\left(\left\{x \in M: u(x) = 0\right\}\right) \gtrsim  \lambda^{\frac{1}{2}} \sum_{D}{\frac{\|u\|_{L^1(D)}}{\|u\|_{L^{\infty}(D)}}} \gtrsim \lambda^{\frac{3-n}{4}},$$
where the sum ranges over all nodal domains $D$. 
\end{theorem}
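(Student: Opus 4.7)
The plan is to run the Dirichlet-vs-second-flow comparison of Theorem 1 with the genuine Neumann (reflected) heat semigroup replacing the auxiliary operator $\Xi$, and with the heat content isoperimetric conjecture replacing the Sogge--Zelditch bound. This exchange is what converts the denominator $\|\nabla u\|_{L^\infty}$ into $\|u\|_{L^\infty}$ and moves a factor of $\lambda^{1/2}$ to the front.

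Fix a nodal domain $D$, assume $u\ge 0$ on $D$, and write $e^{t\Delta_D}u = e^{-\lambda t}u$ for the Dirichlet heat flow and $e^{t\Delta_N}u$ for the flow driven by reflected Brownian motion on $\overline{D}$ (granted by hypothesis). First I would establish the pointwise comparison
$$ 0 \le e^{t\Delta_N}u(x) - e^{t\Delta_D}u(x) \le \|u\|_{L^\infty(D)}\,p_t(x). $$
Via Feynman--Kac this is transparent: on Brownian paths avoiding $\partial D$ the two semigroups coincide, while on paths hitting $\partial D$ the Dirichlet contribution is $0$ and the Neumann one is at most $\|u\|_{L^\infty(D)}$, weighted by the hitting probability $p_t(x)$. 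Crucially, unlike in the comparison with $\Xi$, there is no extra $t^{1/2}$ factor because we do not exploit the vanishing of $u$ on $\partial D$ here.

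Integrating over $D$ and using the $L^1$-conservation of the Neumann flow yields
$$ e^{-\lambda t}\|u\|_{L^1(D)} = \int_D e^{t\Delta_D}u\,dx \ge \int_D e^{t\Delta_N}u\,dx - \|u\|_{L^\infty(D)}\int_D p_t\,dx = \|u\|_{L^1(D)} - \|u\|_{L^\infty(D)}\int_D p_t\,dx, $$
which rearranges to
$$ \int_D p_t(x)\,dx \ge (1-e^{-\lambda t})\frac{\|u\|_{L^1(D)}}{\|u\|_{L^\infty(D)}}. $$
Choose $t \sim \lambda^{-1}$, which by the second remark after the conjecture (combined with the Faber--Krahn bound on the inradius of a nodal domain) is the largest time at which heat content is still comparable to $\mathcal{H}^{n-1}(\partial D)\sqrt{t}$ without loss; the prefactor $1-e^{-\lambda t}$ then becomes an absolute constant. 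Applying the heat content isoperimetric conjecture at the same time gives
$$ c\,\mathcal{H}^{n-1}(\partial D)\,\lambda^{-1/2} \ge \int_D p_t\,dx \gtrsim \frac{\|u\|_{L^1(D)}}{\|u\|_{L^\infty(D)}}, $$
so $\mathcal{H}^{n-1}(\partial D) \gtrsim \lambda^{1/2}\|u\|_{L^1(D)}/\|u\|_{L^\infty(D)}$. Summing over nodal domains $D$, using $\|u\|_{L^\infty(D)}\le \|u\|_{L^\infty(M)}$ and adding the $L^1$-norms, yields the first asserted inequality; the second is precisely the Sogge--Zelditch bound $\|u\|_{L^1(M)}\gtrsim \lambda^{(1-n)/4}\|u\|_{L^\infty(M)}$ recalled in the introduction.

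The main obstacle (and the reason both hypotheses appear in the statement) is already flagged: the construction of the reflected Brownian motion on $\overline{D}$ near the singular set $\{u=|\nabla u|=0\}$, and the heat content isoperimetry itself. Once these are granted, every remaining step is essentially bookkeeping, and the two-line calculation above is the entire argument. The only subtle point worth double-checking is the choice of time $t\sim\lambda^{-1}$ on the isoperimetric side: one needs the comparability of $\int_D p_t$ and $\sqrt{t}\,\mathcal{H}^{n-1}(\partial D)$ to hold uniformly up to this scale, which is exactly what the remark following the conjecture predicts for nodal domains.
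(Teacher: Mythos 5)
Your argument is correct and coincides with the paper's proof: the same pointwise comparison $e^{t\Delta_N}u - e^{t\Delta_D}u \le p_t(x)\|u\|_{L^\infty(D)}$ justified by Feynman--Kac, the same integration using $L^1$-conservation of the Neumann flow, the same choice $t = \lambda^{-1}$, and the same invocation of heat content isoperimetry followed by a sum over nodal domains and the Sogge--Zelditch $L^1$ bound. You have merely spelled out the rearrangement and the couple-until-hitting-time reasoning a bit more explicitly than the paper does.
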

\begin{proof} The proof has strong similarities to our previous argument. Again, without loss of generality, we assume $u(x) > 0$ on $D$ and write $e^{t\Delta_D}$ and $e^{t\Delta_N}$ for evolution
under Dirichlet and Neumann data, respectively. Our new comparison estimate is even simpler and states that on a nodal domain $D$
$$ e^{t\Delta_N}u - e^{t\Delta_D}u \leq p_{t}(x)\| u \|_{L^{\infty}(D)}.$$ 
The proof for this comparison statement is easy to sketch: the difference between Dirichlet and Neumann solutions arises from those Brownian motions
hitting the boundary. The difference is maximized if all those particles hitting the boundary arrive in a maximum of $u$ after having been
reflected.  \\

Integrating the comparison at time $t = \lambda^{-1}$ yields
\begin{align*}
 e^{-1}\int_{D}{u(x) dx} &= \int_{D}{e^{ \Delta_D}u(x) dx} \\
&\geq \int_{D}{e^{\Delta_N}u(x) - p_t(x)\|u\|_{L^{\infty}(D)} dx}  \\
&= \int_{D}{u(x) dx} - \|u\|_{L^{\infty}(D)}\int_{D}{p_t(x)dx}.
\end{align*}
Therefore, assuming heat content isoperimetry and using the Sogge-Zelditch inequality
$$ \mathcal{H}^{n-1}(\partial D) \gtrsim \lambda^{1/2}\frac{\|u\|_{L^1(D)}}{\|u\|_{L^{\infty}(D)}}.$$
Summing over all nodal domains and using the Sogge-Zelditch inequality yields the result.
 \end{proof}

\subsection{Yet another proof.} Another variant of the proof is as follows. We assume again the existence of reflected
Brownian motion. The estimate
$$ e^{t\Delta_N}u - e^{t\Delta_D}u \leq C t^{1/2} p_t(x) \|\nabla u\|_{L^{\infty}}$$
follows from studying the difference between Brownian motion reflected and absorbed at the boundary and using the fact that within $t$
units of time a Brownian motion may travel a distance of up to $\sim t^{1/2}$. Then,
\begin{align*}
 e^{-\lambda t}\int_{D}{u(x) dx} &= \int_{D}{e^{\lambda t \Delta_D}u(x) dx} \\
&\geq \int_{D}{e^{\lambda t\Delta_N}u(x) -C t^{1/2} p_t(x)\|\nabla u\|_{L^{\infty}(D)} dx}  \\
&= \int_{D}{u(x) dx} - C t^{1/2}\|\nabla u\|_{L^{\infty}(D)}\int_{D}{p_t(x)dx}.
\end{align*}
In the limit $t \rightarrow 0^+$,
$$ \lim_{t \rightarrow 0^+}{\frac{1}{\sqrt{t}}\int_{D}{p_t(x)dx}} \gtrsim \lambda \frac{\|u\|_{L^1}}{\|\nabla u\|_{L^{\infty}}}$$
and summing over all domains gives with the Sogge-Zelditch inequality the desired result.

\subsection{Geometric structure of nodal sets.} The quantity $p_t(x)$ can be seen as a local measure of the closeness and size of the boundary
for each nodal domain. It seems extremely natural to conjecture the following for the global $p_t(x)$ function (which is defined by demanding that
its restriction to a nodal domain coincides with the local $p_t(x)$ function there).

\begin{conjecture} Let $(M,g)$ be a compact $C^{\infty}-$manifold without boundary. There exists a constant $c > 0$ depending only on $(M,g)$ such that if $p_t(x)$ 
is globally defined with respect to the nodal set of a Laplacian eigenfunction with eigenvalue $\lambda$, then
$$ p_{\lambda^{-1}}(x) > c \qquad \mbox{for all}~x \in M.$$
\end{conjecture}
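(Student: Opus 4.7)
The plan is to combine a Faber--Krahn-type bound on the inradius of nodal domains with Gaussian lower bounds for the heat kernel at the wavelength scale. Fix $x \in M$; if $x$ lies on the nodal set the conjecture is trivial, so assume $x$ belongs to a nodal domain $D$. Since $u$ vanishes on $\partial D$ and has constant sign on $D$, the first Dirichlet eigenvalue of $D$ is exactly $\lambda$. Domain monotonicity of $\lambda_1$ gives $\lambda_1(B) \leq \lambda$ for every geodesic ball $B \subset D$, forcing $\mathrm{rad}(B) \leq C\lambda^{-1/2}$ with $C = C(M,g)$. In particular, $r := d(x, \partial D) \leq C\lambda^{-1/2}$, so the nearest boundary point is within one wavelength of $x$.

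With this reduction, it suffices to produce a set $A \subset M \setminus D$ with $d(x, A) \lesssim \lambda^{-1/2}$ and $|A| \gtrsim \lambda^{-n/2}$. Indeed, by standard Gaussian lower bounds for the heat kernel on the compact smooth manifold $(M,g)$, one has $K(\lambda^{-1}, x, z) \gtrsim \lambda^{n/2}$ uniformly for such $z$, hence
$$ p_{\lambda^{-1}}(x) \;\geq\; \mathbb{P}_x\bigl(X_{\lambda^{-1}} \in A\bigr) \;=\; \int_A K(\lambda^{-1}, x, z)\,dz \;\gtrsim\; 1, $$
the first inequality holding because any Brownian path ending in $M \setminus D$ must have crossed $\partial D$ by time $\lambda^{-1}$.

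To construct $A$, I would look near a point $y \in \partial D$ closest to $x$. Away from the singular set $\{u = |\nabla u| = 0\}$, which has vanishing $(n-2)$-dimensional Minkowski content by Cheeger--Naber--Valtorta, $\partial D$ is a smooth hypersurface with principal curvatures bounded by $|\mathrm{Hess}\,u|/|\nabla u|$. Using the standard bound $|\mathrm{Hess}\,u| \lesssim \lambda \|u\|_\infty$, this quantity is $\lesssim \sqrt{\lambda}$ at any regular point $y'$ where $|\nabla u(y')|$ is comparable to $\sqrt{\lambda}\|u\|_\infty$. At such a $y'$ there is an exterior ball of radius $\sim \lambda^{-1/2}$ tangent to $\partial D$ from the complement side and lying in $M \setminus D$; one takes $A$ to be this ball, whose distance to $x$ is $\lesssim \lambda^{-1/2}$ and whose volume is $\sim \lambda^{-n/2}$.

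The principal obstacle is the existence of such a good regular point $y'$ within distance $\lesssim \lambda^{-1/2}$ of $x$. The smallness of the singular set does not by itself prevent $|\nabla u|$ from being much smaller than $\sqrt{\lambda}\|u\|_\infty$ on large regular portions of $\partial D$; on those portions the nodal hypersurface may be very curved at the wavelength scale and the exterior ball construction degenerates. A natural route is to exploit Green's identity $\int_{\partial D}|\nabla u|\,d\mathcal{H}^{n-1} = \lambda\int_D u$, in combination with the local doubling estimates of Donnelly--Fefferman, to pigeonhole out such a point within one wavelength of $x$. An alternative which sidesteps the gradient issue entirely is to establish a direct lower bound of the form $|(M\setminus D)\cap B(x, C\lambda^{-1/2})| \gtrsim \lambda^{-n/2}$, which is itself a local isoperimetric-type statement very much in the spirit of the heat-content conjecture formulated earlier in the paper.
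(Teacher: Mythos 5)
The statement you are addressing is labeled a \emph{conjecture} in the paper; the author explicitly calls it ``extremely difficult'' and offers no proof. Your text is therefore an attack on an open problem, not a reconstruction of an argument in the paper, and by your own admission it has a gap.

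The Faber--Krahn reduction is fine (modulo a slip: domain monotonicity gives $\lambda_1(B) \geq \lambda_1(D) = \lambda$ for $B \subset D$, not $\leq$, though the resulting inradius bound $\mathrm{rad}(B) \lesssim \lambda^{-1/2}$ is what you actually use). The genuine problem is the pivot to a volume lower bound on $M \setminus D$ near $x$. The quantity $p_t(x)$ measures paths that \emph{hit} $\partial D$ by time $t$, and you replace this by the probability of \emph{ending up outside} $D$ at time $t$; this is a real loss of information, and the latter can be much smaller. Concretely, a nodal domain $D$ can contain a ``hole'' (an adjacent nodal domain of opposite sign) shaped like a thick body with a long thin tentacle; near a point $x$ adjacent only to the tentacle, $M \setminus D$ is a thin sliver of volume far below $\lambda^{-n/2}$, so no set $A$ of the required size exists within a wavelength of $x$ --- yet the hitting probability $p_{\lambda^{-1}}(x)$ could still be of order one, since a Brownian particle readily strikes a thin hypersurface. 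Your exterior-tangent-ball construction is an attempt to repair this, but it requires a regular boundary point $y'$ within distance $\lambda^{-1/2}$ of $x$ where $|\nabla u(y')| \sim \sqrt{\lambda}\|u\|_\infty$, and nothing currently known supplies such a point: the Cheeger--Naber--Valtorta bound controls the size of the singular set but says nothing about how small $|\nabla u|$ can be on large regular portions of the nodal set. The fallback you name --- a local lower bound $|(M \setminus D) \cap B(x, C\lambda^{-1/2})| \gtrsim \lambda^{-n/2}$ --- is itself an open statement of comparable depth. So the proposal correctly identifies the shape of the difficulty but does not close it, which is consistent with the paper leaving the statement as a conjecture.
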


This conjecture, while seeming likely, should be extremely difficult. In particular, combining it with heat content isoperimetry at time $t = \lambda^{-1}$
immediately would immediately imply one half of Yau's conjecture
$$ \mathcal{H}^{n-1}\left(\left\{x \in M: u(x) = 0\right\}\right) \gtrsim \lambda^{\frac{1}{2}}.$$

\subsection{Heat content, Laplacian eigenvalues and the inradius.}
Given a domain $\Omega \in \mathbb{R}^n$, we can define the first eigenvalue of the domain as
$$ \lambda_{1}(\Omega) = \inf_{f \in H^1_0(\Omega)}{\frac{\int_{\Omega}{\left| \nabla f(x)\right|^2dx}}{\int_{\Omega}{f(x)^2dx}}}.$$
An inequality of the form
$$ \lambda_1(\Omega) \gtrsim r^{-2},$$
where $r$ is the inradius of the domain, is trivial if $n=1$, true for simply connected domains in $n=2$ (Hayman's theorem) and false for $n \geq 3$. Indeed, in dimensions $n \geq 3$ it
is possible to introduce very thin spikes making the inradius small but having little overall influence on the eigenvalue. However, Lieb \cite{l} in a celebrated paper has shown that 
Hayman's theorem 'essentially' generalizes to higher dimensions: for any domain $\Omega \in \mathbb{R}^n$, there is a ball $B$ of radius $r \sim \lambda^{-1/2}$
such that $|\Omega \cap B| \sim |B|$ (the theorem also gives a precise relationship between the implicit constants). We conjecture that this phenomenon persists for the
heat content.
\begin{conjecture} Let $\Omega \in \mathbb{R}^n$ be a bounded open set. If $c_1 > 0$ and all $0 < t < \lambda_1(\Omega)^{-1}$
 $$ \int_{\Omega}{p_t(x)dx} \geq c_1\mathcal{H}^{n-1}(\partial \Omega)\sqrt{t},$$
then there is a ball $B$ of radius $\sqrt{t}$ such that $|B \cap \Omega| \geq c_2|B|$, where $c_2$ depends only on the dimension and $c_1$.
\end{conjecture}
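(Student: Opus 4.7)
The plan is to argue by contradiction, combining a simple porosity lemma with the trivial inequality $p_t(x) \leq 1$. Fix $t>0$, write $r = \sqrt{t}$, and suppose for contradiction that every open ball $B \subset \mathbb{R}^n$ of radius $r$ satisfies $|B \cap \Omega| < c_2 |B|$, with $c_2$ to be chosen small at the end.

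First I would convert this uniform porosity into a thinness statement on $\Omega$ itself. For any $x \in \Omega$ with $\rho := d(x, \partial \Omega)$, the ball $B(x, \min(\rho, r))$ sits inside $\Omega$, so
$$\min\!\left(\frac{\rho}{r},\, 1\right)^{\!n} = \frac{|B(x, \min(\rho, r))|}{|B(x,r)|} \leq \frac{|B(x,r)\cap \Omega|}{|B(x,r)|} < c_2,$$
which forces $\rho < r\, c_2^{1/n}$. In other words $\Omega \subset N_{rc_2^{1/n}}(\partial \Omega)$: every point of $\Omega$ lies within distance $rc_2^{1/n}$ of the boundary.

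Next I would pass from thinness to a volume bound via a Vitali-type covering. Write $\rho_0 = rc_2^{1/n}$, pick a maximal disjoint family of balls $B(y_i, \rho_0)$ with $y_i \in \partial\Omega$, and triple the radii to obtain a cover of $\partial\Omega$ by at most $N$ balls of radius $3\rho_0$. Under mild regularity of $\partial\Omega$ (enough to ensure the surface-density estimate $\mathcal{H}^{n-1}(\partial \Omega \cap B(y_i, \rho_0)) \gtrsim \rho_0^{n-1}$), disjointness gives $N \lesssim \mathcal{H}^{n-1}(\partial \Omega)/\rho_0^{n-1}$. Since the $3\rho_0$-balls cover the $\rho_0$-neighborhood of $\partial\Omega$, they cover $\Omega$ by step~1, and
$$|\Omega| \leq N \cdot C(n)\rho_0^n \leq C(n)\, \rho_0\, \mathcal{H}^{n-1}(\partial \Omega) = C(n)\, c_2^{1/n}\, \mathcal{H}^{n-1}(\partial \Omega)\sqrt{t}.$$
Combining with the hypothesis and the trivial bound $p_t(x) \leq 1$,
$$c_1 \mathcal{H}^{n-1}(\partial \Omega)\sqrt{t} \leq \int_\Omega p_t(x)\, dx \leq |\Omega| \leq C(n)\, c_2^{1/n}\, \mathcal{H}^{n-1}(\partial \Omega)\sqrt{t},$$
so $c_1 \leq C(n)\, c_2^{1/n}$; taking $c_2 = \tfrac{1}{2}(c_1/C(n))^n$ contradicts this and closes the argument. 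In particular the dependence $c_2 \sim c_1^n$ drops out naturally.

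The main obstacle is the covering estimate in the middle step: the inequality $|N_{\rho_0}(\partial \Omega)| \leq C(n)\, \rho_0\, \mathcal{H}^{n-1}(\partial \Omega)$ requires enough regularity of $\partial\Omega$ for Hausdorff measure to control covering numbers at scale $\rho_0$ with a purely dimensional constant; for $(n-1)$-rectifiable, Ahlfors-regular boundaries this is classical, but for genuinely rough open sets one probably has to restate the conjecture with the upper Minkowski content $\overline{\mathcal{M}}^{n-1}$ in place of $\mathcal{H}^{n-1}$, after which the plan goes through verbatim. The conjecture is arguably most natural precisely in the regime where the two notions coincide, so I would not regard this as a serious defect. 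It is also worth noting that the sharper bound $p_t(x) \geq 1 - \varepsilon(c_2)$ available from the thinness (a standard exit-time argument for Brownian motion in a porous set) offers no improvement here, since $p_t \leq 1$ is already enough to conclude.
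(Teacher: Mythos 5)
The paper states this as an open \emph{conjecture} and gives no proof, so there is no argument of the author's to compare against; I will assess your proposal on its own terms. The chain of reductions (porosity at scale $\sqrt{t}$ implies thinness, implies a volume bound, implies a contradiction with the hypothesis via $p_t\leq 1$) is correct in structure, and you rightly observe that the heat content enters only through the trivial bound $p_t\leq 1$. It is worth making that explicit: what you actually establish is the purely geometric implication that $|\Omega|\geq c_1\mathcal{H}^{n-1}(\partial\Omega)\sqrt{t}$ already forces a ball of radius $\sqrt{t}$ meeting $\Omega$ with density $\geq c_2(n,c_1)$. This is slightly stronger than the stated conjecture and makes no use of Brownian motion at all; it suggests the conjecture as written is really a statement about content of $\partial\Omega$ at scale $\sqrt{t}$ rather than about heat flow.

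The gap you flag, however, is not a peripheral technicality but the whole crux. Your covering step needs $|N_{\rho_0}(\partial\Omega)|\lesssim\rho_0\,\mathcal{H}^{n-1}(\partial\Omega)$ at the \emph{fixed} scale $\rho_0=c_2^{1/n}\sqrt{t}$, and you derive this from the Ahlfors-type density lower bound $\mathcal{H}^{n-1}(\partial\Omega\cap B(y,\rho_0))\gtrsim\rho_0^{n-1}$ for all $y\in\partial\Omega$. That hypothesis fails for perfectly ordinary bounded open sets: a ball with a slit removed, or any boundary containing a piece of codimension greater than one, has boundary points at which the $\mathcal{H}^{n-1}$-density is zero. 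So what you have is a conditional proof under additional regularity, not a proof of the conjecture as stated. The proposed patch of replacing $\mathcal{H}^{n-1}$ by upper Minkowski content does not ``go through verbatim'' either: $\overline{\mathcal{M}}^{n-1}$ is defined by a $\limsup$ as $\rho\to 0^+$ and therefore gives no bound on $|N_{\rho_0}(\partial\Omega)|$ at the fixed scale $\rho_0\sim\sqrt{t}$ appearing in the argument; substituting it amounts to hypothesizing the very neighborhood-volume bound the covering step was meant to produce. To handle genuinely rough $\Omega$ one needs a route that does not pass through covering $\partial\Omega$ at a scale decoupled from $\sqrt{t}$ -- for example, an upper bound on $\int_\Omega p_t$ by a boundary quantity at scale $\sqrt{t}$ (which is essentially the paper's first conjecture) would close the loop without regularity assumptions.
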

Of course, via Lieb's theorem, this would establish a mutual equivalence between the first Laplacian eigenvalue, the size of balls having a large
intersection with the domain and the time up to which heat content isoperimetry is sharp ($t = \lambda_1(\Omega)^{-1}$).

\end{document}